\documentclass[11pt,a4paper]{article}

\usepackage[T1]{fontenc}
\usepackage[utf8]{inputenc}
\usepackage[margin=2.7cm]{geometry}
\usepackage{amsmath,amssymb}
\usepackage[authoryear,longnamesfirst]{natbib}
\usepackage{algorithm}
\usepackage{algpseudocode}
\usepackage{subcaption}
\usepackage{tikz}
\usetikzlibrary{3d, positioning}
\usepackage[colorlinks=true,citecolor=blue,linkcolor=blue,urlcolor=blue]{hyperref}
\usepackage{doi}

\colorlet{poscolor}{white}
\colorlet{negcolor}{black!85}

\newcommand{\drawtensorlayer}[2]{
  \pgfmathsetmacro{\zpos}{(#1 - 1) * 3.4}

  \draw[black, thick] (0, 0, \zpos) -- (8, 0, \zpos) -- (8, 8, \zpos) -- (0, 8, \zpos) -- cycle;

  \foreach \val [count=\idx] in {#2} {
    \pgfmathsetmacro{\r}{int(floor((\idx-1)/8))}
    \pgfmathsetmacro{\c}{int(mod(\idx-1, 8))}
    \pgfmathsetmacro{\xmin}{\c}
    \pgfmathsetmacro{\xmax}{\c+1}
    \pgfmathsetmacro{\ymin}{7-\r}
    \pgfmathsetmacro{\ymax}{8-\r}

    \ifnum\val=1
      \fill[poscolor] (\xmin, \ymin, \zpos) -- (\xmax, \ymin, \zpos) -- (\xmax, \ymax, \zpos) -- (\xmin, \ymax, \zpos) -- cycle;
    \else
      \fill[negcolor] (\xmin, \ymin, \zpos) -- (\xmax, \ymin, \zpos) -- (\xmax, \ymax, \zpos) -- (\xmin, \ymax, \zpos) -- cycle;
    \fi
    \draw[black!40, ultra thin] (\xmin, \ymin, \zpos) -- (\xmax, \ymin, \zpos) -- (\xmax, \ymax, \zpos) -- (\xmin, \ymax, \zpos) -- cycle;
  }
}

\newtheorem{theorem}{Theorem}
\newtheorem{lemma}[theorem]{Lemma}
\newtheorem{proposition}[theorem]{Proposition}
\newtheorem{corollary}[theorem]{Corollary}
\newtheorem{example}[theorem]{Example}
\newtheorem{definition}[theorem]{Definition}

\newenvironment{proof}[1][Proof]{\noindent\textit{#1.}\ }{\par\vspace{6pt}}

\title{A computational framework for multidimensional loop-cocyclic Hadamard matrices}

\author{
Manuel Gonz\'alez-Regadera\thanks{Corresponding author. Department of Applied Mathematics I, Universidad de Sevilla, Spain. \texttt{mgonzalez34@us.es}}
\and
Ra\'ul M. Falc\'on\thanks{Department of Applied Mathematics I, Universidad de Sevilla, Spain. \texttt{rafalgan@us.es}}
}

\date{}

\begin{document}

\maketitle

\begin{abstract}
The cocyclic development of Hadamard matrices has recently been extended from groups to loops by means of a cohomology theory that incorporates associativity obstructions. Over the finite field $\mathbb F_2$, the resulting loop-cocycles can be computed as solutions of homogeneous linear systems, making the framework suitable for exact computation. In this paper, we investigate multidimensional Hadamard matrices arising from finite loops. We introduce the notion of $\delta$-compatible pairs of $2$-cochains over a loop $L$ as a natural extension of the usual $2$-cocycle identity, which allows the use of two distinct $2$-cochains. They form a vector space naturally isomorphic to the direct product of the space of $2$-cocycles and the space of $1$-cochains. This yields a generalization of the classical cocyclic construction of multidimensional Hadamard matrices from groups to arbitrary loops, while preserving the computational advantages of the cocyclic approach. The obtained decomposition shows that the determination of Hadamard equivalence classes can be reduced from the full space of $\delta$-compatible pairs to the smaller space of ordinary $2$-cocycles, eliminating a redundant factor of $2^n$ from the search space. Based on this result, we develop and implement an algorithm in {\sc GAP}/{\sc GRAPE} for the construction and equivalence classification of multidimensional Hadamard matrices arising from finite loops. Computational results for all groups of orders $4$ and $8$, together with a non-associative loop of order $8$, show that the proposed three-dimensional construction refines the classical cocyclic Hadamard classification. While all examples collapse into a single equivalence class in dimension two, they split into several distinct classes in dimension three. In particular, the non-associative loop produces a three-dimensional Hadamard class that does not arise from any of the groups considered, showing that the multidimensional construction detects structural information that is invisible at the cocyclic matrix level.
\end{abstract}

\noindent\textbf{Keywords:} Multidimensional Hadamard matrix, Loop, Cocyclic development.

\section{Introduction}\label{sec:Introduction}

A {\em Hadamard matrix} $H$ of order $n$ is an $n\times n$ array, with entries in the cyclic group $\langle\,-1\,\rangle = \{-1,1\}$, such that $HH^T=nI_n$. Here, $I_n$ denotes the identity matrix of order $n$. In particular, $n$ must be $1$, $2$ or a multiple of $4$. A Hadamard matrix is said to be {\em normalized} if all elements in its first row and first column are $1$. Every Hadamard matrix is {\em Hadamard equivalent} to a normalized one. That is, one can be obtained from the other by permutations and negations of rows and columns. The {\em Hadamard conjecture} states that there is a Hadamard matrix of order $4t$ for every positive integer $t$. A brute-force search is computationally intractable because it requires evaluating $2^{n^{2}}$ possible configurations.

\newpage

In the literature, there are different algebraic approaches for constructing infinite families of Hadamard matrices (see in this regard the handbook of \citealp{Horadam2007}). Since not all orders are reached by these methods, the difficulty spikes for those ones not covered by these algebraic approaches. In the last $20$ years, the smallest unsolved case has been $n = 668$. In August 2026, however, \citealp{alpoge2026} have successfully generated computationally verified examples of all $12$ previously unknown Hadamard matrices of sizes under 2000 (see also \citealp{epoch2026}).

\citealp{Shlichta71, Shlichta79} generalized the notion of a Hadamard matrix to any higher dimension $d>2$, and pointed out that such arrays might have applications to encryption and error-correcting codes. An {\em improper $d$-dimensional Hadamard matrix} of order $n$ with entries in $\langle\,-1\,\rangle$ is any array
\[H:=\left(H\left[i_1,\ldots,i_d\right]\right)_{i_1,\ldots,i_d\in [n]}\]
with $[n]:=\{1,\ldots,n\}$, such that all parallel $(d-1)$-dimensional sections are mutually orthogonal. That is,
\[\sum_{i_1,\ldots,i_{k-1},i_{k+1},\ldots,i_d\in [n]}H\left[i_1,\ldots,i_{k-1},a,i_{k+1},\ldots,i_d\right]\cdot H\left[i_1,\ldots,i_{k-1},b,i_{k+1},\ldots,i_d\right]=n^{d-1}\cdot \delta_{ab}\]
for all $a,b,k\in [n]$, where $\delta_{ab}$ is the Kronecker delta related to $a,b$. This array is termed {\em proper} whenever all parallel lines in each planar section, in all axis-normal orientations, are mutually orthogonal. That is, for each pair of elements $k,l\in [n]$,
{\footnotesize \[\sum_{j\in [n]}H\left[i_1,\ldots,i_{k-1},a,i_{k+1},\ldots,i_{l-1},j,i_{l+1},\ldots,i_d\right]\cdot H\left[i_1,\ldots,i_{k-1},b,i_{k+1},\ldots,i_{l-1},j,i_{l+1},\ldots,i_d\right]=n\cdot \delta_{ab}\]}
for all $a,b,i_1,\ldots,i_{k-1},i_{k+1},\ldots,i_{l-1},i_{l+1},\ldots,i_d\in [n]$. Thus, this array exists only if there is a planar Hadamard matrix of the same order. Concerning improper Hadamard matrices, their order must be even. It does not need to be a multiple of four, but it is not currently known whether there is an improper $d$-dimensional Hadamard matrix of every even order.

A simple method to construct proper $d$-dimensional Hadamard matrices is the {\em component product}, which was independently described by \citealp{Yang86} and \citealp{deLauney87}. If $H$ is a Hadamard matrix, then the $d$-dimensional array $A$ described as
\[A[i_1,\ldots,i_d]:=\prod_{1\leq r<s\leq d} H[i_r,i_s]\]
is a proper $d$-dimensional Hadamard matrix. For $d=3$, \citealp{Yang01} proved the existence of improper Hadamard matrices of order $2\cdot 3^t$, with $t\geq 1$.

In 1995, \citealp{Horadam1995} introduced the {\em cocyclic development} of Hadamard matrices over a finite group $G$ as an efficient way to construct Hadamard matrices. Let us briefly recall the fundamentals of the additive framework of this theory, which makes use of the additive finite field $\mathbb{Z}_2:=(\mathbb{F}_2,+)$. The {\em coboundary operator} $\delta$ maps each function $f:G^k\to \mathbb{Z}_2$, with $k$ a positive integer, to a function $\delta_f:G^{k+1}\to \mathbb{Z}_2$ so that
\begin{equation}\label{eq:coboundary}
    \delta_f(a_1,\ldots,a_{k+1}):=f(a_1,\ldots,a_k)+f(a_2,\ldots,a_{k+1})+ \sum_{i=1}^kf(a_1,\ldots,a_ia_{i+1},\ldots,a_{k+1})
\end{equation}
for all $a_1,\ldots,a_{k+1}\in G$. The function $f$ is termed a {\em $k$-cochain} over $G$, while the function $\delta_f$ is termed a {\em $(k+1)$-coboundary} over $G$. If $\delta_f={\bf 0}$, then $f$ is termed a {\em $k$-cocycle} over $G$. The $k$-cochain $f$ is said to be {\em normalized} if $f(a_1,\ldots,a_k)=0$ whenever $a_i$ is the identity element of $G$ for some $i$. When no confusion arises, we simply refer to these functions as cochains, coboundaries, or cocycles. From now on, we denote the sets of $k$-cochains, $k$-coboundaries and $k$-cocycles over $G$ respectively by $\mathcal{C}^k(G)$, $\mathcal{B}^k(G)$ and $\mathcal{Z}^k(G)$. All of them are abelian groups under the pointwise product. Moreover, $\mathcal{B}^k(G)\trianglelefteq\mathcal{Z}^k(G)$. The quotient group

\[\mathcal{H}^k(G):=\mathcal{Z}^k(G)/\mathcal{B}^k(G)\]
is the {\em $k^{\mathrm{th}}$ cohomology group of $G$ over $\langle\,-1\,\rangle$}. This cohomology is useful for constructing $d$-dimensional Hadamard matrices based on $G$. To see it, for each $k$-cochain $f\in \mathcal{C}^k(G)$, we define the matrix $M_f$ so that
\begin{equation}\label{eq:cocyclicHM}
    M_f[a_1,\ldots,a_k]:=(-1)^{f(a_1,\ldots,a_k)}
\end{equation}
for all $a_1,\ldots,a_k\in G$. A $d$-dimensional Hadamard matrix $H$, with $d\geq 2$, is termed {\em $d$-cocyclic over $G$} if there is a $d$-cocycle $\psi\in \mathcal{Z}^d(G)$ such that $H$ coincides with the {\em cocyclic matrix} $M_\psi$. For $d=2$, \citealp{Horadam1995} proved the so-called {\em cocyclic Hadamard test}, which ensures that if $M_\psi$ is normalized, then it is Hadamard if and only if the sum of all entries in each row is zero, except for the constant row. Thus, determining whether a cocyclic normalized matrix is Hadamard is computationally much faster than the general case.

\citealp{OC11} computationally enumerated all $2$-cocyclic Hadamard matrices of order less than 40. In particular, all Hadamard matrices of order up to 20 are Hadamard equivalent to a $2$-cocyclic normalized one. Moreover, 16 of the 60 Hadamard equivalence classes of order 24, and six of the 487 classes of order 28 are $2$-cocyclic. In addition, there are 100 $2$-cocyclic Hadamard equivalence classes of order 32, and 35 of order 36. Furthermore, \citealp{Horadam1998} described a simple method to construct proper $d$-dimensional Hadamard matrices from $2$-cocycles over finite groups. If the cocyclic matrix $M_\psi$ is Hadamard, then there is a proper $d$-dimensional Hadamard matrix $H$ such that
\begin{equation}\label{eq:2tod}
H[a_1,\ldots,a_d]:=(-1)^{\sum_{k=2}^d \psi\left(\prod_{j=1}^{k-1}a_j,\,a_k\right)}
\end{equation}
for all $a_1,\ldots,a_d\in G$. \citealp{Alvarez2015} proved that this array is not $d$-cocyclic in general. For $d=3$, they computationally proved the existence of $64$ improper three-dimensional $3$-cocyclic Hadamard matrices on $\mathbb{Z}_2^2$, and $32$ improper ones on $\mathbb{Z}_4$. None of them is proper. To the best of our knowledge, no other article has addressed this issue, even if one may find some recent papers concerning high dimensional Hadamard matrices (see \citealp{Krcadinac2023,Krcadinac2025}). Moreover, a natural extension of the classical cocyclic Hadamard test for $2$-dimensional $2$-cocyclic Hadamard matrices is currently an open problem for dimension $d>2$.

The cocyclic development of Hadamard matrices over a finite group has been naturally generalized to non-associative loops by \citealp{Alvarez2019}, \citealp{Alvarez2020} and \citealp{Falcon2021}. Recall that a {\em loop} of order $n$ is a finite set $L$ of $n$ elements that is endowed with a binary operation with an identity element such that both equations $ax=b$ and $ya=b$ have unique solutions $x,y\in L$ for all $a,b\in L$. Such a loop is a group if and only if the binary operation is associative. Its multiplication table is a {\em Latin square} of order $n$, that is, an $n\times n$ array such that each symbol of $L$ occurs exactly once in each row and exactly once in each column. The respective sets $\mathcal{C}^k(L)$, $\mathcal{B}^k(L)$ and $\mathcal{Z}^k(L)$ of $k$-cochains, $k$-coboundaries and $k$-cocycles over $L$, as well as the notion of $d$-cocyclic Hadamard matrix over $L$, are defined analogously to those in the associative case once the underlying group $G$ is replaced by the loop $L$. The three mentioned sets are abelian groups under the pointwise product. A relevant aspect here is that the cocyclic Hadamard test is still valid in this non-associative context (see \citealp{Alvarez2019}). Despite this, the classical cohomology is not valid for non-associative loops because, unlike the associative case, $\mathcal{B}^k(L)\not \subseteq \mathcal{Z}^k(L)$ (see \citealp{Alvarez2020}).

\newpage

There are two different approaches concerning cohomology of non-associative loops: \citealp{Eilenberg1947} focused on the {\em associator} $\mathcal{A}(a,b,c):=((ab)c)/(a(bc))$, while \citealp{Johnson1990} focused on module extensions. Based on the first approach, a formal framework for the cocyclic development of Hadamard matrices over loops has recently been introduced by \citealp{Falcon26}. They have introduced the notion of a {\em $k$-loop-cocycle} over a finite loop $L$, with $k\geq 2$, as a $k$-cochain $\psi\in\mathcal{C}^k(L)$ such that $\delta_\psi=\delta^2(f)$ for some $f\in\mathcal{C}^{k-1}(L)$. This is a $k$-cocycle whenever $L$ is a group, because $\delta^2(f)={\bf 0}$ in that case. The coboundary operator therefore measures the lack of associativity of a loop as an obstruction that prevents the $k$-cochain from being a cocycle. From now on, we denote by $\mathcal{Z}_{\mathcal{L}}^k(L)$ the abelian group under the pointwise product of $k$-loop-cocycles of $L$ over $\mathbb{Z}_2$ and let $\mathcal C_0^k(L)$ denote the subgroup of normalized $k$-cochains. \citealp{Falcon26} proved that $\mathcal{B}^k(L)\trianglelefteq \mathcal{Z}_{\mathcal{L}}^k(L)$ and $\mathcal{Z}^k(L)\trianglelefteq \mathcal{Z}_{\mathcal{L}}^k(L)$. They also defined the abelian subgroup
    \[\mathcal{B}_0^k(L):=\left\{\delta_f\in \mathcal{B}^k(L)\colon\, f\in \mathcal{C}^{k-1}_0(L)\right\}\trianglelefteq \mathcal{B}^k(L)\]
    and the quotient group
    \[\mathcal{B}_\mathcal{L}^k(L):=\mathcal{B}_0^k(L)/\left(\mathcal{B}_0^k(L)\cap \mathcal{Z}^k(L)\right).\]
Then, they proved that
\begin{equation}\label{eq:exact}
\mathcal{Z}^k(L)\cong\mathcal{Z}_{\mathcal{L}}^k(L)/\mathcal{B}_{\mathcal{L}}^k(L).
\end{equation}
The abelian groups $\mathcal{Z}^k(L)$, $\mathcal{Z}_{\mathcal{L}}^k(L)$ and $\mathcal{B}_{\mathcal{L}}^k(L)$ are vector spaces over the Galois field $\mathbb{F}_2$, once each loop-cocycle $\psi\in\mathcal{Z}_{\mathcal{L}}^k(L)$ is represented as a solution of the homogeneous linear system of equations described by the condition $\delta_\psi\in \mathrm{Im}(\delta^2)$ (see \citealp{Falcon26}). As a consequence, we have from \eqref{eq:exact} that
\begin{equation}\label{eq:decomposition}
\mathcal{Z}_\mathcal{L}^k(L)\cong \mathcal{Z}^k(L)\oplus \mathcal{B}_\mathcal{L}^k(L)
\end{equation}
and hence,
\[\dim_{\mathbb{F}_2}\left(\mathcal{Z}_\mathcal{L}^k(L)\right)=\dim_{\mathbb{F}_2}\left(\mathcal{Z}^k(L)\right)+\dim_{\mathbb{F}_2}\left(\mathcal{B}_\mathcal{L}^k(L)\right).\]
Moreover, if $\psi\in \mathcal{Z}_\mathcal{L}^k(L)$ satisfies $\delta_\psi=\delta^2(f)$ for some $f\in \mathcal{C}^{k-1}(L)$, then
\begin{equation}\label{eq:psi_delta}
\psi=(\psi+\delta_f)+\delta_f
\end{equation}
where $\psi+\delta_f\in \mathcal{Z}^k(L)$ and $\delta_f\in \mathcal{B}_0^k(L)$.

A $d$-dimensional Hadamard matrix $H$, with $d\geq 2$, is $d$-loop-cocyclic over $L$ if there is a $d$-loop-cocycle $\psi\in \mathcal{Z}_\mathcal{L}^d(L)$ such that $H$ coincides with the loop-cocyclic matrix $M_\psi$ defined as in \eqref{eq:cocyclicHM}. The classical notion of a cocyclic Hadamard matrix arises naturally when $L$ is a group. The case $d=2$ has been comprehensively studied by \citealp{Falcon26}, who computationally proved the existence of four new Hadamard equivalence classes of order 24 and eight new classes of order 28, which are not cocyclic over any finite group, but they are loop-cocyclic over non-associative loops. They also proved that the classical cocyclic Hadamard conjecture does not hold as a characterization. More precisely, the associated matrix $M_\psi$ to a $2$-loop-cocycle $\psi\in \mathcal{Z}_{\mathcal{L}}^2(L)$ is a Hadamard matrix only if
\[\sum_{b\in  L} M_\psi[a,b]=0\]
whenever $a\in L\setminus\{e\}$, but the converse is valid only if $\psi\in \mathcal{Z}^2(L)$.

\newpage

This paper investigates multidimensional loop-cocyclic Hadamard matrices in dimension $d>2$. Our main objective is to extend the classical cocyclic construction from groups to arbitrary finite loops while preserving its computational tractability. To this end, in Section~\ref{sec:compatible} we introduce the notion of $\delta$-compatible pairs of $2$-cochains, which generalizes the usual $2$-cocycle identity and allows the simultaneous use of two distinct $2$-cochains. We also establish their algebraic structure and show that they admit a decomposition in terms of ordinary $2$-cocycles and $1$-cochains. In Section~\ref{sec:2_loop_cocycles}, we use this framework to generalize the classical multidimensional cocyclic construction from groups to loops, obtaining proper multidimensional Hadamard matrices from $\delta$-compatible pairs of $2$-cochains. We then relate this construction to the theory of $2$-loop-cocycles and show that the resulting multidimensional Hadamard equivalence classes can be studied through ordinary cocycles without loss of generality. Finally, Section~\ref{sec:computational} presents a computational framework implemented in {\sc GAP}/{\sc GRAPE} for the construction and classification of the resulting Hadamard arrays, together with computational results for groups and non-associative loops of small order.

\section{$\delta$-compatible $2$-cochains over loops}\label{sec:compatible}

In this section, we introduce the notion of a pair of $2$-cochains over a finite loop $L$ of order $n$, with an identity element $e$, which are compatible with respect to the coboundary operator $\delta$.

\begin{definition}\label{definition:compatible} A pair $(\psi,\phi)\in \mathcal{C}^2(L)\times \mathcal{C}^2(L)$ is said to be {\em $\delta$-compatible over $L$} whenever
\begin{equation}\label{eq:psi_phi}
\psi(a,b)+\phi(ab,c)=\phi(a,bc)+\psi(b,c)
\end{equation}
holds for all $a,b,c\in L$. It is said to be {\em symmetric} if the pair $(\phi,\psi)$ is also $\delta$-compatible. The set of all $\delta$-compatible pairs of $2$-cochains over $L$ is denoted by $\mathcal{C}^2_\delta(L)$, while its subset of symmetric pairs is denoted by $\mathcal{S}\mathcal{C}^2_\delta(L)$.
\end{definition}

This compatibility condition extends the usual $2$-cocycle identity described by \eqref{eq:coboundary} by allowing two distinct $2$-cochains. More precisely, if $\psi=\phi$ in Definition \ref{definition:compatible}, then $\psi\in\mathcal{Z}^2(L)$. That is,
\[\mathcal{Z}^2(L) \subseteq \mathcal{S}\mathcal{C}^2_\delta(L) \subseteq \mathcal{C}^2_\delta(L).\]
The first inclusion is, indeed, strictly proper. To see this, consider the constant $2$-cochains $\mathbf{0}, \mathbf{1} \in \mathcal{C}^2(L)$ defined by $\mathbf{0}(a,b) = 0$ and $\mathbf{1}(a,b) = 1$ for all $a,b \in L$. It is easily verified that $(\mathbf{0}, \mathbf{1}) \in \mathcal{S}\mathcal{C}^2_\delta(L)\setminus \mathcal{Z}^2(L)$. The next example illustrates a pair of $2$-cochains in $\mathcal{C}^2_\delta(L)\setminus \mathcal{S}\mathcal{C}^2_\delta(L)$.

\begin{example}\label{example_0} Let $L$ be the non-associative loop having the following Latin square as its multiplication table.
\[L\equiv\begin{array}{|c|c|c|c|c|c|c|c|}\hline
1 & 2 & 3 & 4 & 5 & 6 & 7 & 8\\ \hline
2 & 1 & 4 & 3 & 6 & 5 & 8 & 7\\ \hline
3 & 4 & 1 & 2 & 7 & 8 & 5 & 6\\ \hline
4 & 3 & 2 & 1 & 8 & 7 & 6 & 5\\ \hline
5 & 6 & 8 & 7 & 3 & 4 & 2 & 1\\ \hline
6 & 5 & 7 & 8 & 4 & 3 & 1 & 2\\ \hline
7 & 8 & 6 & 5 & 1 & 2 & 4 & 3\\ \hline
8 & 7 & 5 & 6 & 2 & 1 & 3 & 4\\ \hline
\end{array}.\]
It was the first non-associative loop over which a cocycle was defined (see \citealp{Alvarez2019}). In addition, we consider the $2$-cochains $\psi,\phi\in\mathcal{C}^2(L)$ whose associated matrices, as defined in \eqref{eq:cocyclicHM}, are

{\scriptsize \[M_\psi=\left(\begin{array}{cccccccc}
+ & + & + & + & + & + & + & +\\
+ & + & + & + & + & + & + & +\\
+ & + & - & - & + & + & - & -\\
+ & + & - & - & + & + & - & -\\
+ & + & + & + & + & + & - & -\\
+ & + & + & + & + & + & - & -\\
+ & + & - & - & - & - & - & -\\
+ & + & - & - & - & - & - & -\\
\end{array}\right)\hspace{1cm}\text{and}\hspace{1cm} M_\phi=\left(\begin{array}{cccccccc}
+ & - & - & + & - & - & + & +\\
- & + & + & - & - & - & + & +\\
- & + & - & + & + & + & + & +\\
+ & - & + & - & + & + & + & +\\
- & - & + & + & - & + & + & -\\
- & - & + & + & + & - & - & +\\
+ & + & + & + & - & + & - & +\\
+ & + & + & + & + & - & + & -\\
\end{array}\right).\]}

In both matrices, the entries $1$ and $-1$ are respectively represented by the signs $+$ and $-$. Notice that $\psi$ is normalized, but $\phi$ is not. It can be verified that the pair $(\psi,\phi)$ is $\delta$-compatible. Thus, for instance,

{\footnotesize\[\psi(2,4)+\phi(2\cdot 4,7)=\psi(2,4)+\phi(3,7)=0+0\equiv 0\pmod 2\equiv 1+1=\phi(2,6)+\psi(4,7)=\phi(2,4\cdot 7)+\psi(4,7).\]}

Nevertheless, the pair $(\psi,\phi)$ is not symmetric because

{\footnotesize\[\phi(1,1)+\psi(1\cdot 1,2)=\phi(1,1)+\psi(1,2)=0+0\equiv 0 \pmod 2\not\equiv 0+1=\psi(1,2)+\phi(1,2)=\psi(1,1\cdot 2)+\phi(1,2).\]}
\end{example}

\vspace{0.2cm}

In what follows, we show some preliminary results concerning the notion of $\delta$-compatibility.

\begin{lemma}\label{lemma:abelian_group_delta}
Both sets $\mathcal{C}^2_\delta(L)$ and $\mathcal{S}\mathcal{C}^2_\delta(L)$ are abelian groups under the pointwise product of $2$-cochains.
\end{lemma}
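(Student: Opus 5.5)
The key observation is that the $\delta$-compatibility condition \eqref{eq:psi_phi} is linear over $\mathbb{F}_2$ in the pair $(\psi,\phi)$. So the plan is to show that $\mathcal{C}^2_\delta(L)$ and $\mathcal{S}\mathcal{C}^2_\delta(L)$ are subgroups of the abelian group $\mathcal{C}^2(L)\times\mathcal{C}^2(L)$. Here the group operation on pairs is the componentwise pointwise product of $2$-cochains. In the additive notation $\mathbb{Z}_2=(\mathbb{F}_2,+)$ used throughout, this product is simply pointwise addition modulo $2$: in terms of the matrices $M_f$ of \eqref{eq:cocyclicHM} we have $M_fM_g=M_{f+g}$ entrywise. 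Commutativity and associativity are then inherited from the ambient group, and only the subgroup axioms need checking.

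First, the identity $(\mathbf{0},\mathbf{0})$ lies in $\mathcal{C}^2_\delta(L)$, since both sides of \eqref{eq:psi_phi} vanish. The same computation with the roles of the two cochains swapped shows that it is also symmetric.

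Second, every element of $\mathcal{C}^2(L)\times\mathcal{C}^2(L)$ is its own inverse, because $f+f=\mathbf{0}$ for every $f$. So closure under inverses is automatic.

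Third, for closure, take $(\psi_1,\phi_1),(\psi_2,\phi_2)\in\mathcal{C}^2_\delta(L)$ and fix $a,b,c\in L$. Add the two instances of \eqref{eq:psi_phi} and regroup, using only that evaluation of a cochain at a fixed argument ($ab$ or $bc$ is just a fixed element of $L$) commutes with pointwise addition:
\[(\psi_1+\psi_2)(a,b)+(\phi_1+\phi_2)(ab,c)=(\phi_1+\phi_2)(a,bc)+(\psi_1+\psi_2)(b,c).\]
Hence $(\psi_1+\psi_2,\phi_1+\phi_2)\in\mathcal{C}^2_\delta(L)$. Associativity of $L$ is never used, so the argument is valid for an arbitrary loop.

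For the symmetric pairs, note that $(\psi,\phi)\in\mathcal{S}\mathcal{C}^2_\delta(L)$ means that both $(\psi,\phi)$ and $(\phi,\psi)$ lie in $\mathcal{C}^2_\delta(L)$. The swap map $\tau(\psi,\phi)=(\phi,\psi)$ is a group automorphism of $\mathcal{C}^2(L)\times\mathcal{C}^2(L)$. Therefore
\[\mathcal{S}\mathcal{C}^2_\delta(L)=\mathcal{C}^2_\delta(L)\cap\tau\bigl(\mathcal{C}^2_\delta(L)\bigr)\]
is an intersection of two subgroups, and hence a subgroup. Equivalently, one can apply the closure computation above to both orderings of the pair.

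No step is a real obstacle. The only point needing care is to state explicitly that the "pointwise product" of $\pm1$-valued functions corresponds to addition of $\mathbb{F}_2$-valued cochains. Once that is fixed, the verification is the routine linearity check described above.
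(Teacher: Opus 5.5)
Your proposal is correct and follows essentially the same route as the paper: you add the two instances of \eqref{eq:psi_phi} to get closure, take the zero pair as the identity with every pair its own inverse, and inherit associativity and commutativity from $\mathbb{Z}_2$. Writing $\mathcal{S}\mathcal{C}^2_\delta(L)=\mathcal{C}^2_\delta(L)\cap\tau(\mathcal{C}^2_\delta(L))$ is only a tidier packaging of the paper's remark that the same linearity argument applies to the swapped condition.
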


\begin{proof}
Let $(\psi_1, \phi_1), (\psi_2, \phi_2) \in \mathcal{C}^2_\delta(L)$. Then,
\begin{align*}
(\psi_1 + \psi_2)(a,b) + (\phi_1 + \phi_2)(ab,c) &= \big(\psi_1(a,b) + \phi_1(ab,c)\big) + \big(\psi_2(a,b) + \phi_2(ab,c)\big) \\
&= \big(\phi_1(a,bc) + \psi_1(b,c)\big) + \big(\phi_2(a,bc) + \psi_2(b,c)\big) \\
&= (\phi_1 + \phi_2)(a,bc) + (\psi_1 + \psi_2)(b,c),
\end{align*}
for all $a,b,c \in L$. Thus, $(\psi_1 + \psi_2, \phi_1 + \phi_2) \in \mathcal{C}^2_\delta(L)$. Associativity and commutativity follow directly from the abelian group structure of $\mathbb{Z}_2$. The identity element is the zero pair $({\bf 0},{\bf 0})\in\mathcal{C}^2(L)\times \mathcal{C}^2(L)$, and every element is its own inverse since $x = -x$ in $\mathbb{Z}_2$. Consequently, $\mathcal{C}^2_\delta(L)$ is an elementary abelian $2$-group. The proof for $\mathcal{S}\mathcal{C}^2_\delta(L)$ follows identically by applying the same linearity argument to the swapped condition. \hfill $\Box$
\end{proof}

\begin{lemma}\label{lem:fixed_phi} If $(\psi_1,\phi),\,(\psi_2,\phi)\in\mathcal{C}_\delta^2(L)$, then $\psi_1+\psi_2=\lambda\cdot{\bf 1}$, with $\lambda\in \mathbb{Z}_2$.
\end{lemma}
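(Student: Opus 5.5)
Subtracting the two compatibility identities reduces the problem to one about $\theta:=\psi_1+\psi_2$ alone. By Lemma~\ref{lemma:abelian_group_delta}, the pair $(\psi_1,\phi)+(\psi_2,\phi)=(\theta,{\bf 0})$ lies in $\mathcal{C}^2_\delta(L)$. Condition \eqref{eq:psi_phi} with $\phi={\bf 0}$ then reads
\[\theta(a,b)=\theta(b,c)\qquad\text{for all } a,b,c\in L.\]
So it suffices to show that any $2$-cochain satisfying this identity is constant, i.e.\ equal to $\lambda\cdot{\bf 1}$ for some $\lambda\in\mathbb{Z}_2$.

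The key step is to move freely through the arguments of $\theta$. Since the right-hand side does not depend on $a$, $\theta(a,b)=\theta(a',b)$ for all $a,a'\in L$, so $\theta(a,b)$ depends only on $b$; write $\theta(a,b)=g(b)$. Likewise, since the left-hand side does not depend on $c$, $\theta(b,c)=\theta(b,c')$, so $\theta(b,c)$ depends only on $b$; write $\theta(b,c)=h(b)$.

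It remains to show these two descriptions force $\theta$ to be constant. For arbitrary $x,y\in L$, the identity gives $\theta(x,y)=\theta(y,z)$ for any $z$, and the right-hand side depends only on $y$. Now compare $\theta(x,y)$ with $\theta(x',y')$ for arbitrary pairs: we have $\theta(x,y)=\theta(y,y')$ (taking $a=x$, $b=y$, $c=y'$) and $\theta(x',y')=\theta(y,y')$ (by independence of the first argument). Hence $\theta(x,y)=\theta(x',y')$, so $\theta$ is constant and equals $\lambda\cdot{\bf 1}$ with $\lambda:=\theta(e,e)$.

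No real obstacle is expected. The only care needed is the bookkeeping of which variable is free at each step. Notably, the loop structure, and in particular the products $ab$ and $bc$, plays no role once $\phi$ cancels.
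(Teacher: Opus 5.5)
Your proof is correct and takes essentially the same route as the paper. The paper also cancels the $\phi$ terms to obtain $(\psi_1+\psi_2)(a,b)=(\psi_1+\psi_2)(b,c)$ and then shows that the sum is independent of each argument. Passing through Lemma~\ref{lemma:abelian_group_delta} to get $(\theta,{\bf 0})$ is just a repackaging of that subtraction, and your final chaining step does the same job as the paper's specialization $a:=b$.
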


\begin{proof} Since $(\psi_1,\phi),\,(\psi_2,\phi)\in\mathcal{C}_\delta^2(L)$, we have from \eqref{eq:psi_phi}
\[\psi_1(a,b)+\psi_1(b,c)=\phi(ab,c)+\phi(a,bc)=\psi_2(a,b)+\psi_2(b,c)\]
for all $a,b,c\in L$. Then, $(\psi_1+\psi_2)(a,b)=(\psi_1+\psi_2)(b,c)$ for all $a,b,c\in L$. As a consequence, $(\psi_1+\psi_2)(a,b)$ does not depend on the element $a$, and $(\psi_1+\psi_2)(b,c)$ does not depend on the element $c$. Thus, there must exist a pair of maps $g,h\in\mathcal{C}^1(L)$ such that $(\psi_1+\psi_2)(a,b)=g(b)$ and $(\psi_1+\psi_2)(b,c)=h(b)$ for all $a,b,c\in L$. But $(\psi_1+\psi_2)(a,b)=(\psi_1+\psi_2)(b,c)$ for all $a,b,c\in L$, so $g=h$. Moreover, taking $a:=b$ in the identity $(\psi_1+\psi_2)(a,b)=(\psi_1+\psi_2)(b,c)$ yields $(\psi_1+\psi_2)(b,b)=(\psi_1+\psi_2)(b,c)$ for all $b,c\in L$, that is, $g(b)=g(c)$ for all $b,c\in L$. Hence, $g$ is constant. \hfill $\Box$
\end{proof}

\begin{lemma}\label{lem:equivalence} It is verified that $(\psi,\phi)\in\mathcal{C}_\delta^2(L)$ if and only if
\begin{equation}\label{eq:equivalence}
\delta_\phi(a,b,c)=(\psi+\phi)(a,b)+(\psi+\phi)(b,c)
\end{equation}
for all $a,b,c\in L$.
\end{lemma}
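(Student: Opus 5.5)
The plan is a direct algebraic manipulation over $\mathbb{Z}_2$: expand $\delta_\phi$ using \eqref{eq:coboundary} with $k=2$, and show that \eqref{eq:equivalence} is obtained from \eqref{eq:psi_phi} by adding the same terms to both sides. Since every step will be reversible, both implications follow at once.

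First, I will write out the coboundary of a $2$-cochain explicitly:
\[
\delta_\phi(a,b,c)=\phi(a,b)+\phi(b,c)+\phi(ab,c)+\phi(a,bc).
\]
Next, I will rewrite \eqref{eq:psi_phi}. Since $x=-x$ in $\mathbb{Z}_2$, it is equivalent to
\[
\phi(ab,c)+\phi(a,bc)=\psi(a,b)+\psi(b,c).
\]
Adding $\phi(a,b)+\phi(b,c)$ to both sides, the left-hand side becomes $\delta_\phi(a,b,c)$. The right-hand side becomes
\[
\psi(a,b)+\phi(a,b)+\psi(b,c)+\phi(b,c)=(\psi+\phi)(a,b)+(\psi+\phi)(b,c).
\]
This is exactly \eqref{eq:equivalence}.

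Each transformation is adding a fixed element of $\mathbb{Z}_2$ to both sides, which is invertible, and it is performed for each fixed triple $a,b,c\in L$. Hence \eqref{eq:psi_phi} holds for all $a,b,c$ if and only if \eqref{eq:equivalence} holds for all $a,b,c$, which is the claimed equivalence.

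There is no genuine obstacle. The only step needing care is using the correct form of the coboundary from \eqref{eq:coboundary} for $k=2$, namely including both inner terms $\phi(ab,c)$ and $\phi(a,bc)$. I also need to note that no associativity of $L$ is used anywhere, so the argument is valid for arbitrary loops.
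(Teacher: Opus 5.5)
Your proof is correct and follows essentially the same route as the paper's. Both expand $\delta_\phi(a,b,c)$ via the coboundary formula for $k=2$ and then add or cancel $\phi(a,b)+\phi(b,c)$ on both sides, giving a reversible chain of equivalences with \eqref{eq:psi_phi}; the paper simply runs the chain in the opposite direction.
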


\begin{proof} We claim that \eqref{eq:equivalence} is equivalent to \eqref{eq:psi_phi}. To prove it, let $a,b,c\in L$. Then,
\begin{align*}
\delta_\phi(a,b,c)=(\psi+\phi)(a,b)+(\psi+\phi)(b,c) & \Leftrightarrow\\
\phi(a,b)+\phi(ab,c)+\phi(a,bc)+\phi(b,c)=\psi(a,b)+\phi(a,b)+\psi(b,c)+\phi(b,c) & \Leftrightarrow\\
\phi(ab,c)+\phi(a,bc)=\psi(a,b)+\psi(b,c) & \Leftrightarrow\\
\psi(a,b)+\phi(ab,c)=\phi(a,bc)+\psi(b,c). &
\end{align*}
\hfill $\Box$
\end{proof}

\begin{lemma}\label{lemma:compatible} If $(\psi,\phi)\in\mathcal{C}_\delta^2(L)$, then the following statements hold.
\begin{enumerate}
\item $\psi(a,e)=\psi(e,b)$ for all $a,b\in L$.

\item If both $\psi$ and $\phi$ are normalized, then $\psi=\phi$.

\item The pair $(\psi,\phi)$ is symmetric if and only if $\psi+\phi = \lambda \cdot \bf 1$, with $\lambda\in\mathbb{Z}_2$. If this is the case, then $\psi,\phi\in\mathcal{Z}^2(L)$.
\end{enumerate}
\end{lemma}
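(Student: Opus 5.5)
The plan is to specialize the defining identity \eqref{eq:psi_phi} at the identity element $e$ in one slot at a time, using the loop axioms $ae=ea=a$. Each of the three items should then drop out of one or two such substitutions.

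For item 1, I substitute $b:=e$ in \eqref{eq:psi_phi}. This gives $\psi(a,e)+\phi(a,c)=\phi(a,c)+\psi(e,c)$. Cancelling $\phi(a,c)$ yields $\psi(a,e)=\psi(e,c)$ for all $a,c\in L$, which is the claim.

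For item 2, I substitute $c:=e$ in \eqref{eq:psi_phi}. This gives $\psi(a,b)+\phi(ab,e)=\phi(a,b)+\psi(b,e)$. When both cochains are normalized, $\phi(ab,e)=0$ and $\psi(b,e)=0$, so $\psi(a,b)=\phi(a,b)$ for all $a,b\in L$.

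For item 3, write $\chi:=\psi+\phi$. Suppose first that the pair is symmetric, so both $(\psi,\phi)$ and $(\phi,\psi)$ lie in $\mathcal{C}^2_\delta(L)$.
\begin{itemize}
\item Applying item 1 to $(\psi,\phi)$ shows that $\psi(a,e)=\psi(e,b)$ for all $a,b$. Hence all values $\psi(a,e)$ and $\psi(e,b)$ equal one constant $\nu$.
\item Applying item 1 to $(\phi,\psi)$ shows likewise that $\phi(e,x)$ equals a constant $\mu$ for all $x\in L$.
\item Substituting $a:=e$ in \eqref{eq:psi_phi} for $(\psi,\phi)$ gives $\psi(e,b)+\phi(b,c)=\phi(e,bc)+\psi(b,c)$, that is, $\chi(b,c)=\nu+\mu$ for all $b,c$.
\end{itemize}
So $\chi=\lambda\cdot\mathbf{1}$ with $\lambda=\nu+\mu$.

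Conversely, suppose $\phi=\psi+\lambda\cdot\mathbf{1}$. Substituting this into \eqref{eq:psi_phi}, the constant $\lambda$ appears once on each side and cancels. What remains is $\psi(a,b)+\psi(ab,c)=\psi(a,bc)+\psi(b,c)$, which is exactly $\delta_\psi=\mathbf{0}$, so $\psi\in\mathcal{Z}^2(L)$. The same cancellation applied with $\psi=\phi+\lambda\cdot\mathbf{1}$ gives $\phi\in\mathcal{Z}^2(L)$. Since $\psi$ and $\phi$ are cocycles differing by a constant, $(\phi,\psi)$ satisfies \eqref{eq:psi_phi} by the same $\lambda$-cancellation, so the pair is symmetric. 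This also proves the final assertion, because in the symmetric case we have just shown that $\chi$ is constant.

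The only step that is not immediate is the forward direction of item 3. The natural first attempts do not give constancy: adding the two compatibility identities only shows that $\chi$ is a $2$-cocycle, and Lemma~\ref{lem:fixed_phi} does not apply directly because the two pairs do not share a second coordinate. The way through is to apply item 1 to both orderings, $(\psi,\phi)$ and $(\phi,\psi)$, which forces $\psi$ and $\phi$ to be constant on the row and column of $e$, and then to substitute $a=e$.
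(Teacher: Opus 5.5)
Your proof is correct and follows essentially the same route as the paper's. Items 1 and 2 use the same substitutions $b=e$ and $c=e$, and the forward direction of item 3 likewise applies item 1 to both orderings before substituting at $e$; you substitute $a=e$ where the paper uses $c=e$. The converse and the cocycle conclusion use the same cancellation of $\lambda$.
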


\begin{proof} We prove each statement separately.

\begin{enumerate}
\item It follows easily from taking $b=e$ in \eqref{eq:psi_phi}.

\item If we take $c=e$ in \eqref{eq:psi_phi}, then
\[\psi(a,b)=\psi(a,b)+\phi(ab,e)=\phi(a,b)+\psi(b,e)=\phi(a,b)\]
for all $a,b\in L$. Thus, the second statement holds.

\item First, we assume that $(\psi,\phi)\in\mathcal{SC}_\delta^2(L)$. From the first statement, there exist two elements $r,s\in\mathbb{Z}_2$ such that $\psi(a,e)=r$ and $\phi(a,e)=s$ for all $a\in L$. If we take $c=e$ in \eqref{eq:psi_phi}, then $\psi(a,b)+s=\phi(a,b)+r$ for all $a,b\in L$. Thus, the necessary condition follows from defining $\lambda:=r+s$. If this is the case, then
\[\psi(a,b)+\psi(ab,c)= \psi(a,b)+\phi(ab,c)+\lambda\cdot{\bf 1} =\phi(a,bc)+\psi(b,c)+\lambda\cdot{\bf 1}=\psi(a,bc)+\psi(b,c)\]
for all $a,b,c\in L$. Hence, $\psi\in\mathcal{Z}^2(L)$. Similarly, $\phi\in\mathcal{Z}^2(L)$.

Now, we assume that $\psi+\phi = \lambda \cdot \bf 1$, with $\lambda\in\mathbb{Z}_2$. In particular, we have just proved that $\psi\in\mathcal{Z}^2(L)$. Then,
\[\phi(a,b)+\psi(ab,c)=(\psi(a,b)+\lambda) +\psi(ab,c)=\psi(ab,c)+(\psi(b,c)+\lambda)=\psi(ab,c)+\phi(b,c)\]
for all $a,b,c\in L$. Thus, $(\psi,\phi)\in\mathcal{SC}_\delta^2(L)$. \hfill $\Box$
\end{enumerate}
\end{proof}

\begin{proposition}\label{proposition:compatible} Let $(\psi,\phi)\in\mathcal{C}_\delta^2(L)$ be such that $\phi\in\mathcal{Z}^2(L)$. Then, $(\psi,\phi)\in\mathcal{SC}^2_\delta(L)$.
\end{proposition}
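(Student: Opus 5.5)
By Lemma~\ref{lemma:compatible}(3), it suffices to show that $\psi+\phi=\lambda\cdot\mathbf{1}$ for some $\lambda\in\mathbb{Z}_2$. I will get this from Lemma~\ref{lem:equivalence}. Set $\chi:=\psi+\phi\in\mathcal{C}^2(L)$. Since $(\psi,\phi)\in\mathcal{C}^2_\delta(L)$, identity \eqref{eq:equivalence} gives
\[\delta_\phi(a,b,c)=\chi(a,b)+\chi(b,c)\quad\text{for all }a,b,c\in L.\]
The hypothesis $\phi\in\mathcal{Z}^2(L)$ means $\delta_\phi=\mathbf{0}$. Hence $\chi(a,b)=\chi(b,c)$ for all $a,b,c\in L$.

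This is exactly the functional equation already handled in the proof of Lemma~\ref{lem:fixed_phi}. I will repeat that argument briefly rather than cite it, since that lemma is phrased for two pairs sharing the same $\phi$.

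From $\chi(a,b)=\chi(b,c)$, the value $\chi(a,b)$ does not depend on $a$, and $\chi(b,c)$ does not depend on $c$. So $\chi(a,b)=g(b)$ for a single $g\in\mathcal{C}^1(L)$. Taking $a=b$ gives $g(b)=\chi(b,b)=\chi(b,c)=g(c)$ for all $b,c\in L$. Thus $g$ is constant, say $g\equiv\lambda$, and $\chi=\lambda\cdot\mathbf{1}$.

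Alternatively, one can invoke Lemma~\ref{lem:fixed_phi} directly. Since $\phi\in\mathcal{Z}^2(L)$, the pair $(\phi,\phi)$ is $\delta$-compatible. Together with $(\psi,\phi)\in\mathcal{C}^2_\delta(L)$, Lemma~\ref{lem:fixed_phi} yields $\psi+\phi=\lambda\cdot\mathbf{1}$ immediately. This is the cleanest route, and I would present it as the main proof.

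Finally, applying the ``if'' direction of Lemma~\ref{lemma:compatible}(3) gives $(\psi,\phi)\in\mathcal{SC}^2_\delta(L)$, and also $\psi\in\mathcal{Z}^2(L)$. There is no real obstacle. The only point needing care is that the ``if'' direction of Lemma~\ref{lemma:compatible}(3) requires $\psi\in\mathcal{Z}^2(L)$, which its proof derives from $\psi+\phi=\lambda\cdot\mathbf{1}$ together with $\delta$-compatibility. That derivation applies verbatim here, so the statement follows.
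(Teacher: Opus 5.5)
Your proof is correct and follows essentially the paper's route: the paper substitutes the cocycle identity for $\phi$ directly into \eqref{eq:psi_phi} to get $(\psi+\phi)(a,b)=(\psi+\phi)(b,c)$, deduces constancy by setting $a=b$ and then $c=b$, and concludes with the third statement of Lemma~\ref{lemma:compatible}, exactly as in your first route. Your preferred version, applying Lemma~\ref{lem:fixed_phi} to the pairs $(\psi,\phi)$ and $(\phi,\phi)$, is valid and slightly more economical, since it cites that lemma's functional-equation argument instead of repeating it.
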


\begin{proof} Since $(\psi,\phi)\in\mathcal{C}_\delta^2(L)$ and $\phi\in\mathcal{Z}^2(L)$, we have from \eqref{eq:psi_phi}
\[0=\psi(a,b)+\psi(b,c)+\phi(ab,c)+\phi(a,bc)=\psi(a,b)+\psi(b,c)+\phi(a,b)+\phi(b,c)\]
for all $a,b,c\in L$. That is,
\[(\psi+\phi)(a,b)=(\psi+\phi)(b,c)\]
for all $a,b,c\in L$. On the one hand, if $a=b$, then
\[(\psi+\phi)(b,b)=(\psi+\phi)(b,c)\]
for all $b,c\in L$. On the other hand, if $c=b$, then
\[(\psi+\phi)(a,b)=(\psi+\phi)(b,b)\]
for all $a,b\in L$. Thus, $\psi+\phi=\lambda\cdot{\bf 1}$, and hence the result follows from the third statement of Lemma \ref{lemma:compatible}. \hfill $\Box$
\end{proof}

\begin{lemma}\label{lemma:compatible_a} Let $\psi\in \mathcal{Z}_\mathcal{L}^2(L)$ and $f\in \mathcal{C}^1(L)$ be such that $\delta_\psi=\delta^2(f)$. In addition, let $\phi_{\psi,f}\in\mathcal{C}^2(L)$ be defined so that
    \begin{equation}\label{eq:psi_f}
    \phi_{\psi,f}(a,b):=\psi(a,b)+f(ab)
    \end{equation}
    for all $a,b\in L$. Then, $(\psi,\phi_{\psi,f})\in\mathcal{C}^2_\delta(L)$.
\end{lemma}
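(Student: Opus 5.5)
The plan is to verify the defining identity \eqref{eq:psi_phi} directly for the pair $(\psi,\phi_{\psi,f})$, by first computing $\delta^2(f)$ explicitly. I will not invoke Lemma~\ref{lem:equivalence}, since the direct check should already be short.

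First, I expand $\delta^2(f)=\delta_{\delta_f}$ using \eqref{eq:coboundary}. For $k=1$ this gives $\delta_f(a,b)=f(a)+f(b)+f(ab)$. Applying \eqref{eq:coboundary} again with $k=2$ gives
\[\delta_{\delta_f}(a,b,c)=\delta_f(a,b)+\delta_f(b,c)+\delta_f(ab,c)+\delta_f(a,bc).\]
Substituting, each of $f(a)$, $f(b)$, $f(c)$, $f(ab)$ and $f(bc)$ appears exactly twice and cancels over $\mathbb{Z}_2$. This should leave
\[\delta^2(f)(a,b,c)=f((ab)c)+f(a(bc)),\]
which is the expected measure of the non-associativity of $L$ seen through $f$.

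Next, for arbitrary $a,b,c\in L$, I substitute \eqref{eq:psi_f} into both sides of \eqref{eq:psi_phi}. The left-hand side becomes $\psi(a,b)+\psi(ab,c)+f((ab)c)$. The right-hand side becomes $\psi(a,bc)+f(a(bc))+\psi(b,c)$. Their sum is
\[\psi(a,b)+\psi(b,c)+\psi(ab,c)+\psi(a,bc)+f((ab)c)+f(a(bc))=\delta_\psi(a,b,c)+\delta^2(f)(a,b,c).\]
This sum vanishes by the hypothesis $\delta_\psi=\delta^2(f)$. Since we work over $\mathbb{Z}_2$, the two sides are therefore equal, so $(\psi,\phi_{\psi,f})\in\mathcal{C}^2_\delta(L)$.

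The only step needing care is the cancellation in $\delta^2(f)$. The point to watch is that $f((ab)c)$ and $f(a(bc))$ are genuinely different terms in a non-associative loop and must not be cancelled against each other. Everything else is bookkeeping modulo $2$.
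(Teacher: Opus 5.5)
Your proposal is correct and follows essentially the same route as the paper: you substitute the definition of $\phi_{\psi,f}$ into the identity \eqref{eq:psi_phi} and use the hypothesis $\delta_\psi=\delta^2(f)$ to match the two sides. The only difference is that you explicitly compute $\delta^2(f)(a,b,c)=f((ab)c)+f(a(bc))$, which the paper uses implicitly in the middle step of its chain of equalities.
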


\begin{proof} Let $a,b,c\in L$. Then,
\begin{align*}
 \psi(a,b)+\phi_{\psi,f}(ab,c) & = \psi(a,b)+\psi(ab,c)+f((ab)c)= \tag*{(From \eqref{eq:psi_f})}\\
 & =\psi(a,bc)+\psi(b,c)+f(a(bc))= \tag*{($\psi\in \mathcal{Z}_\mathcal{L}^2(L)$ and $\delta_\psi=\delta^2(f)$)}\\
 &=\phi_{\psi,f}(a,bc)+\psi(b,c). \tag*{(From \eqref{eq:psi_f})}
\end{align*}
\hfill $\Box$
\end{proof}

The following two results show how the abelian groups $\mathcal{C}^2_\delta(L)$ and $\mathcal{S}\mathcal{C}^2_\delta(L)$ are indeed vector subspaces of $\mathcal{C}^2(L) \times \mathcal{C}^2(L)$ over the Galois field $\mathbb{F}_2$ once each pair of $2$-cochains is represented by a solution of the homogeneous linear system of equations described by the condition \eqref{eq:psi_phi}.

\begin{lemma}\label{lemma_delta_compatible_system}
The following statements hold.
\begin{enumerate}
    \item The set $\mathcal{C}^2_\delta(L)$ is identified with the subspace of solutions in $\mathbb{Z}_2^{2n^2}$ of the homogeneous linear system
    \begin{equation}\label{eq:system_delta}
    x_{a,b} + x_{b,c} + y_{ab,c} + y_{a,bc} = 0, \quad \text{for all } a,b,c \in L,
    \end{equation}
    over $\mathbb{Z}_2$ in the set of variables $\left\{x_{a,b}\colon\, a,b\in L\right\}\cup \left\{y_{a,b}\colon\, a,b\in L\right\}$.

    \item The set $\mathcal{S}\mathcal{C}^2_\delta(L)$ is identified with the subspace of solutions in $\mathbb{Z}_2^{2n^2}$ of the homogeneous linear system
    \begin{equation}\label{eq:system_delta_sym}
    \begin{cases}
    x_{a,b} + x_{b,c} + y_{ab,c} + y_{a,bc} = 0, \\
    y_{a,b} + y_{b,c} + x_{ab,c} + x_{a,bc} = 0,
    \end{cases}
    \quad \text{for all } a,b,c \in L,
    \end{equation}
    over $\mathbb{Z}_2$ in the same set of variables.
\end{enumerate}
\end{lemma}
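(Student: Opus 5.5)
The plan is to identify each pair of $2$-cochains $(\psi,\phi)\in\mathcal{C}^2(L)\times\mathcal{C}^2(L)$ with the vector of $\mathbb{Z}_2^{2n^2}$ whose coordinates are $x_{a,b}:=\psi(a,b)$ and $y_{a,b}:=\phi(a,b)$ for all $a,b\in L$. Since $\mathcal{C}^2(L)$ consists of all maps $L^2\to\mathbb{Z}_2$, this assignment is a bijection $\mathcal{C}^2(L)\times\mathcal{C}^2(L)\to\mathbb{Z}_2^{2n^2}$. It is also additive, because the group operation on cochains is the pointwise one. It is therefore an isomorphism of $\mathbb{F}_2$-vector spaces, and it only remains to check that it carries $\mathcal{C}^2_\delta(L)$ and $\mathcal{S}\mathcal{C}^2_\delta(L)$ onto the respective solution sets.

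For the first statement, I will rewrite the defining condition \eqref{eq:psi_phi}, namely $\psi(a,b)+\phi(ab,c)=\phi(a,bc)+\psi(b,c)$. Over $\mathbb{Z}_2$, subtraction coincides with addition, so moving all terms to one side gives
\[\psi(a,b)+\psi(b,c)+\phi(ab,c)+\phi(a,bc)=0.\]
Under the identification above, this is exactly the equation \eqref{eq:system_delta} for the triple $(a,b,c)$, because the products $ab$ and $bc$ are fixed elements of $L$ determined by the multiplication table. Hence $(\psi,\phi)\in\mathcal{C}^2_\delta(L)$ if and only if its image solves the whole system \eqref{eq:system_delta}, which consists of $n^3$ equations. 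The solution set of a homogeneous linear system is a subspace. This matches Lemma \ref{lemma:abelian_group_delta} and upgrades its group structure to an $\mathbb{F}_2$-subspace structure, the scalar action being trivial over $\mathbb{F}_2$.

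For the second statement, symmetry means that the swapped pair $(\phi,\psi)$ also satisfies \eqref{eq:psi_phi}. Repeating the same rewriting with the roles of $x$ and $y$ exchanged gives the second family of equations in \eqref{eq:system_delta_sym}. Thus $\mathcal{S}\mathcal{C}^2_\delta(L)$ corresponds to the intersection of two solution subspaces, which is again the solution space of the combined homogeneous system \eqref{eq:system_delta_sym}.

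There is no real obstacle here. The only point needing care is that the system is genuinely linear, even though the indices $ab$ and $bc$ depend on the loop operation. This causes no difficulty, since the loop structure only selects which variables appear and never multiplies variables together. Non-associativity is also harmless, because the equation involves $ab$ and $bc$ separately rather than the products $(ab)c$ and $a(bc)$.
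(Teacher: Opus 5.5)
Your proposal is correct and follows essentially the same route as the paper: identify each pair of cochains with a vector of $\mathbb{Z}_2^{2n^2}$ coordinatewise, observe that \eqref{eq:psi_phi} rewritten over $\mathbb{Z}_2$ is exactly \eqref{eq:system_delta}, and add the swapped equations for the symmetric case. Your extra remark that the identification is additive, hence $\mathbb{F}_2$-linear, anticipates what the paper only records afterwards in Proposition~\ref{prop:vector_space_delta}.
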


\begin{proof}
Every solution of \eqref{eq:system_delta} or \eqref{eq:system_delta_sym} is a vector
\[
s := \left(s_{0,0}^{(x)}, \ldots, s_{n-1,n-1}^{(x)}, s_{0,0}^{(y)}, \ldots, s_{n-1,n-1}^{(y)}\right) \in \mathbb{Z}_2^{2n^2},
\]
where $s_{a,b}^{(x)}$ corresponds to $x_{a,b}$ and $s_{a,b}^{(y)}$ corresponds to $y_{a,b}$. The vector $s$ uniquely identifies a pair $(\psi_s, \phi_s) \in \mathcal{C}^2(L) \times \mathcal{C}^2(L)$, defined by $\psi_s(a,b) := s_{a,b}^{(x)}$ and $\phi_s(a,b) := s_{a,b}^{(y)}$ for all $a,b \in L$. Evaluating \eqref{eq:system_delta} over $\mathbb{Z}_2$ shows that $\psi_s(a,b) + \psi_s(b,c) + \phi_s(ab,c) + \phi_s(a,bc) = 0$ for all $a,b,c \in L$, which is equivalent to \eqref{eq:psi_phi}, which proves the first statement. Concerning the second one, adding the second equation $y_{a,b} + y_{b,c} + x_{ab,c} + x_{a,bc} = 0$ guarantees that $\phi_s(a,b) + \phi_s(b,c) + \psi_s(ab,c) + \psi_s(a,bc) = 0$ also holds for all $a,b,c \in L$, which asserts that $(\phi_s, \psi_s) \in \mathcal{C}^2_\delta(L)$ and hence $(\psi_s, \phi_s) \in \mathcal{S}\mathcal{C}^2_\delta(L)$. \hfill $\Box$
\end{proof}

\begin{proposition}\label{prop:vector_space_delta} The sets
$\mathcal{C}^2_\delta(L)$ and $\mathcal{S}\mathcal{C}^2_\delta(L)$ are vector spaces over $\mathbb{F}_2$.
\end{proposition}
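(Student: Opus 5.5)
The plan is to realise both sets as kernels of $\mathbb{F}_2$-linear maps on the ambient space $\mathcal{C}^2(L)\times\mathcal{C}^2(L)$, so that they are subspaces and therefore vector spaces. First observe that $\mathcal{C}^2(L)$, the set of all maps $L^2\to\mathbb{Z}_2$, is an $\mathbb{F}_2$-vector space under pointwise addition and the scalar multiplication $0\cdot\psi={\bf 0}$, $1\cdot\psi=\psi$. Hence $\mathcal{C}^2(L)\times\mathcal{C}^2(L)$ is an $\mathbb{F}_2$-vector space, naturally identified with $\mathbb{Z}_2^{2n^2}$ through the coordinates $s^{(x)}_{a,b},s^{(y)}_{a,b}$ used in Lemma~\ref{lemma_delta_compatible_system}.

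For each triple $(a,b,c)\in L^3$, define the linear functional
\[
\ell_{a,b,c}(x,y):=x_{a,b}+x_{b,c}+y_{ab,c}+y_{a,bc}.
\]
This is a sum of coordinate projections, so it is linear over $\mathbb{F}_2$. By the first statement of Lemma~\ref{lemma_delta_compatible_system}, $\mathcal{C}^2_\delta(L)$ is exactly $\bigcap_{a,b,c}\ker\ell_{a,b,c}$. The second statement of the same lemma shows that $\mathcal{S}\mathcal{C}^2_\delta(L)$ is the intersection of these kernels with the kernels of the swapped functionals $\ell'_{a,b,c}(x,y):=y_{a,b}+y_{b,c}+x_{ab,c}+x_{a,bc}$. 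An intersection of kernels of linear maps is a subspace, which proves both claims.

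Alternatively, one can argue directly from Lemma~\ref{lemma:abelian_group_delta}. That lemma already shows both sets are additive subgroups containing $({\bf 0},{\bf 0})$. Over $\mathbb{F}_2$ the only scalars are $0$ and $1$, and
\[
0\cdot(\psi,\phi)=({\bf 0},{\bf 0})\qquad\text{and}\qquad 1\cdot(\psi,\phi)=(\psi,\phi)
\]
both lie in the set. So closure under scalar multiplication is automatic, and the vector space axioms are inherited from the ambient space.

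There is no real obstacle here. The only care needed is to note that "pointwise product" in Lemma~\ref{lemma:abelian_group_delta} means addition in the additive notation of $\mathbb{Z}_2$. One should also state explicitly that the identification with $\mathbb{Z}_2^{2n^2}$ is a linear isomorphism, so that being a solution space of a homogeneous system transfers to being a subspace of $\mathcal{C}^2(L)\times\mathcal{C}^2(L)$.
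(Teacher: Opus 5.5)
Your proof is correct, and its main argument is the paper's: identify $\mathcal{C}^2(L)\times\mathcal{C}^2(L)$ with $\mathbb{Z}_2^{2n^2}$, then use Lemma~\ref{lemma_delta_compatible_system} to realise both sets as solution spaces (intersections of kernels) of homogeneous linear systems over $\mathbb{F}_2$. Your alternative route is also valid: by Lemma~\ref{lemma:abelian_group_delta} both sets are additive subgroups, and any additive subgroup of an $\mathbb{F}_2$-vector space is automatically a subspace.
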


\begin{proof}
By definition, $\mathcal{C}^2(L) \times \mathcal{C}^2(L)$ is isomorphic to the $2n^2$-dimensional vector space $\mathbb{Z}_2^{2n^2}$ over $\mathbb{F}_2$. By Lemma~\ref{lemma_delta_compatible_system}, $\mathcal{C}^2_\delta(L)$ and $\mathcal{S}\mathcal{C}^2_\delta(L)$ are respectively identified with the solution spaces of the homogeneous linear systems \eqref{eq:system_delta} and \eqref{eq:system_delta_sym} over $\mathbb{Z}_2$. Since the kernel of any linear operator over $\mathbb{F}_2$ is a subspace, both sets $\mathcal{C}^2_\delta(L)$ and $\mathcal{S}\mathcal{C}^2_\delta(L)$ are linear subspaces of $\mathcal{C}^2(L) \times \mathcal{C}^2(L)$, and the result holds. \hfill $\Box$
\end{proof}

In what follows, we characterize both vector spaces $\mathcal{C}^2_\delta(L)$ and $\mathcal{SC}^2_\delta(L)$.

\begin{theorem}\label{theorem:compatible_isomorphism} There is a vector space isomorphism
\[\mathcal{C}^2_\delta(L)\cong \mathcal{Z}^2(L)\times \mathcal{C}^1(L).\]
As a consequence,
\[\dim_{\mathbb{F}_2}\left(\mathcal{C}^2_\delta(L)\right)=\dim_{\mathbb{F}_2}\left(\mathcal{Z}^2(L)\right)+n.\]
\end{theorem}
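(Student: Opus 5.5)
The plan is to construct an explicit linear map $\Theta\colon \mathcal{Z}^2(L)\times\mathcal{C}^1(L)\to\mathcal{C}^2_\delta(L)$ and show it is bijective. For $(\zeta,g)\in\mathcal{Z}^2(L)\times\mathcal{C}^1(L)$ I define $\Theta(\zeta,g):=(\psi,\phi)$ by
\[\phi(a,b):=\zeta(a,b)+g(a)+g(b),\qquad \psi(a,b):=\zeta(a,b)+g(a)+g(b)+g(ab).\]
The key feature is that $\psi+\phi$ is the map $(a,b)\mapsto g(ab)$. This avoids composite terms such as $g((ab)c)$, which would bring in the associator.

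\emph{Step 1: $\Theta$ lands in $\mathcal{C}^2_\delta(L)$.} I substitute into \eqref{eq:psi_phi}, written as $\psi(a,b)+\psi(b,c)+\phi(ab,c)+\phi(a,bc)=0$. The $\zeta$-terms sum to $\delta_\zeta(a,b,c)=0$ because $\zeta\in\mathcal{Z}^2(L)$. The $g$-terms are $g(a),g(b),g(ab),g(b),g(c),g(bc),g(ab),g(c),g(a),g(bc)$, and each appears exactly twice, so they also vanish. Linearity of $\Theta$ over $\mathbb{F}_2$ is clear.

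\emph{Step 2: $\Theta$ is injective.} Suppose $\Theta(\zeta,g)=(\mathbf 0,\mathbf 0)$. Then $\psi+\phi=\mathbf 0$, so $g(ab)=0$ for all $a,b\in L$. Taking $b=e$ gives $g=\mathbf 0$, and then $\zeta=\phi=\mathbf 0$.

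\emph{Step 3: $\Theta$ is surjective (main step).} Let $(\psi,\phi)\in\mathcal{C}^2_\delta(L)$. By the first statement of Lemma~\ref{lemma:compatible}, $\psi(\cdot,e)$ is a constant $r$. Put $u(x):=\phi(x,e)$. Setting $c=e$ in \eqref{eq:psi_phi} gives
\[\psi(a,b)=\phi(a,b)+u(ab)+r.\]
Hence $(\psi+\phi)(a,b)=g(ab)$ with $g:=u+r\cdot\mathbf 1$. I then define $\zeta(a,b):=\phi(a,b)+g(a)+g(b)$, which yields $\Theta(\zeta,g)=(\psi,\phi)$ by construction.

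What remains is to show $\zeta\in\mathcal{Z}^2(L)$. By Lemma~\ref{lem:equivalence},
\[\delta_\phi(a,b,c)=(\psi+\phi)(a,b)+(\psi+\phi)(b,c)=g(ab)+g(bc).\]
The cochain $(a,b)\mapsto g(a)+g(b)$ has coboundary
\[g(a)+g(b)+g(ab)+g(c)+g(a)+g(bc)+g(b)+g(c)=g(ab)+g(bc).\]
This coincides with $\delta_\phi$, so $\delta_\zeta=\mathbf 0$ by linearity of $\delta$. This verification is the only delicate point: a naive guess such as $\phi=\zeta+\delta_g$ produces associator terms $g((ab)c)+g(a(bc))$, which do not vanish in a non-associative loop. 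The choice $g(a)+g(b)$, which is $\delta_g$ without its $g(ab)$ term, is exactly what makes the identity associator-free.

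The map $\Theta$ is therefore a vector space isomorphism. Since $\dim_{\mathbb{F}_2}\mathcal{C}^1(L)=n$, the dimension formula follows immediately. As a sanity check, this agrees with an independent count: $\zeta$ is free in $\mathcal{Z}^2(L)$, the values $u(x)$ for $x\neq e$ are free with $u(e)$ forced by $\zeta(e,e)$, and $r$ contributes one further free bit, giving $(n-1)+1=n$ extra dimensions.
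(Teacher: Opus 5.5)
Your proposal is correct and follows essentially the paper's proof. Your $\Theta(\zeta,g)$ is exactly the paper's $\Theta_{\psi,f}=(\psi+\delta_f,\,\psi+\nu_f)$, and you use the same injectivity argument via $\mu_f$ and the same surjectivity step showing $\Phi+\nu_f\in\mathcal{Z}^2(L)$ through Lemma~\ref{lem:equivalence}; the only cosmetic differences are that you check well-definedness by direct substitution and read off $g$ from $\phi(\cdot,e)$ (setting $c=e$) rather than from $\Phi(e,\cdot)$ (setting $a=e$).
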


\begin{proof} Let us consider the maps
\begin{equation}\label{eq_mu}
\begin{array}{cccc}
\mu:& \mathcal{C}^1(L)& \to & \mathcal{C}^2(L)\\
& f &\mapsto & \begin{array}{cccc}
 \mu_f: & L\times L & \to & \mathbb{Z}_2\\
& (a,b) & \mapsto & \mu_f(a,b):=f(ab)
\end{array}
\end{array}
\end{equation}
\begin{equation}\label{eq_nu}\begin{array}{cccc}
\nu:& \mathcal{C}^1(L)& \to & \mathcal{C}^2(L)\\
& f &\mapsto & \begin{array}{cccc}
 \nu_f: & L\times L & \to & \mathbb{Z}_2\\
& (a,b) & \mapsto & \nu_f(a,b):=f(a)+f(b)
\end{array}
\end{array}
\end{equation}
and
\begin{equation}\label{eq_theta}\begin{array}{cccc}
\Theta:& \mathcal{Z}^2(L)\times  \mathcal{C}^1(L)& \to & \mathcal{C}_\delta^2(L)\\
& (\psi,f) &\mapsto & \Theta_{\psi,f}:=(\psi+\delta_f,\,\psi+\nu_f)
\end{array}
\end{equation}
We claim that the map $\Theta$ is an isomorphism. First, we prove that $\Theta$ is well-defined. Since $\psi\in \mathcal{Z}^2(L)$, we have
\[\delta_{\psi+\nu_f}(a,b,c)=\delta_{\nu_f}(a,b,c)=f(ab)+f(bc)=\mu_f(a,b)+\mu_f(b,c)\]
for all $a,b,c\in L$. Furthermore, since $\delta_f=\mu_f+\nu_f$, we have
\begin{equation}\label{eq_mu_f}
\mu_f=(\psi+\delta_f) + (\psi+\nu_f).
\end{equation}
Hence, Lemma \ref{lem:equivalence} implies $\Theta_{\psi,f}\in \mathcal{C}_\delta^2(L)$.

Second, we prove that $\Theta$ is injective. It is easily verified that $\Theta$ is linear. Thus, it is enough to prove that $\mathrm{ker}(\Theta)=\left\{({\bf 0},\,{\bf 0})\right\}$. So, let $(\psi,f)\in \mathcal{Z}^2(L)\times  \mathcal{C}^1(L)$ be such that $\Theta_{\psi,f}=({\bf 0},\,{\bf 0})$. Then, from \eqref{eq_mu_f} we have $\mu_f={\bf 0}$. That is, $f(ab)=0$ for all $a,b\in L$. Since $L$ is a loop, this is equivalent to saying that $f(c)=0$ for all $c\in L$. That is, $f={\bf 0}$. But then $\delta_f={\bf 0}$, and hence $\psi={\bf 0}$.

Finally, we prove that $\Theta$ is onto. To do it, let $(\Psi,\Phi)\in \mathcal{C}_\delta^2(L)$. From the first statement of Lemma \ref{lemma:compatible}, there exists $r\in \mathbb{Z}_2$ such that $\Psi(a,e)=\Psi(e,b)=r$ for all $a,b\in L$. Let $f\in\mathcal{C}^1(L)$ be defined so that
\[f(a):=\Phi(e,a)+r\]
for all $a\in L$. From Lemma \ref{lem:equivalence}, we have
\begin{equation}\label{eq:Phi}
\left(\Psi+\Phi\right)(a,b)+\left(\Psi+\Phi\right)(b,c)=\delta_{\Phi}(a,b,c)
\end{equation}
for all $a,b,c\in L$. If we take $a=e$, then
\begin{align*}
\left(\Psi+\Phi\right)(e,b)+\left(\Psi+\Phi\right)(b,c)=\delta_{\Phi}(e,b,c)\Leftrightarrow\\
(\Psi(e,b)+\Phi(e,b))+\left(\Psi+\Phi\right)(b,c)=\Phi(e,b)+\Phi(b,c)+\Phi(e,bc)+\Phi(b,c)\Leftrightarrow\\
r+\Phi(e,b)+\left(\Psi+\Phi\right)(b,c)=\Phi(e,b)+\Phi(e,bc) \Leftrightarrow\\
\left(\Psi+\Phi\right)(b,c)=\Phi(e,bc)+r\Leftrightarrow\\
\left(\Psi+\Phi\right)(b,c)=f(bc)=\mu_f(b,c).
\end{align*}
That is,
\begin{equation}\label{eq:mu_f}
\mu_f=\Psi+\Phi.
\end{equation}
Then,
\[\delta_{\nu_f}(a,b,c)=f(ab)+f(bc)=\mu_f(a,b)+\mu_f(b,c)=\left(\Psi+\Phi\right)(a,b)+\left(\Psi+\Phi\right)(b,c)\]
for all $a,b,c\in L$. Thus, we have from \eqref{eq:Phi} that $\delta_{\nu_f}=\delta_\Phi$. As a consequence, $\Phi+\nu_f\in \mathcal{Z}^2(L)$. Moreover,
\[\Theta\left(\Phi+\nu_f,f\right)=\left(\Phi+\nu_f+\delta_f,\, \Phi+\nu_f +\nu_f\right)=\left(\Phi+\mu_f,\, \Phi\right)=\left(\Psi,\Phi\right).\]
The final consequence follows straightforwardly.\hfill $\Box$
\end{proof}

\begin{corollary}\label{corollary:symmetric}  There is a vector space isomorphism
\[\mathcal{SC}^2_\delta(L)\cong \mathcal{Z}^2(L)\times \mathbb{Z}_2.\]
As a consequence, \[\dim_{\mathbb{F}_2}\left(\mathcal{SC}^2_\delta(L)\right)=\dim_{\mathbb{F}_2}\left(\mathcal{Z}^2(L)\right)+1.\]
\end{corollary}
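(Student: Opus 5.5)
Restrict the isomorphism $\Theta$ of Theorem~\ref{theorem:compatible_isomorphism} to a suitable subspace of $\mathcal{Z}^2(L)\times\mathcal{C}^1(L)$. Since $\Theta$ is a linear bijection onto $\mathcal{C}^2_\delta(L)$ and $\mathcal{SC}^2_\delta(L)$ is a subspace of $\mathcal{C}^2_\delta(L)$ (Proposition~\ref{prop:vector_space_delta}), $\Theta^{-1}(\mathcal{SC}^2_\delta(L))$ is a subspace isomorphic to $\mathcal{SC}^2_\delta(L)$. The plan is to show that this preimage is exactly $\mathcal{Z}^2(L)\times\{{\bf 0},{\bf 1}\}$, where ${\bf 1}\in\mathcal{C}^1(L)$ is the constant map. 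Since $\{{\bf 0},{\bf 1}\}\cong\mathbb{Z}_2$, the isomorphism follows, and the dimension formula is immediate.

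\textbf{Step 1: reduce symmetry to a condition on $\mu_f$.} By the third statement of Lemma~\ref{lemma:compatible}, a pair $(\Psi,\Phi)\in\mathcal{C}^2_\delta(L)$ is symmetric if and only if $\Psi+\Phi=\lambda\cdot{\bf 1}$ for some $\lambda\in\mathbb{Z}_2$. For $\Theta_{\psi,f}=(\psi+\delta_f,\psi+\nu_f)$, equation \eqref{eq_mu_f} gives $\Psi+\Phi=\mu_f$. Hence $\Theta_{\psi,f}\in\mathcal{SC}^2_\delta(L)$ if and only if $\mu_f$ is constant, that is, $f(ab)=\lambda$ for all $a,b\in L$.

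\textbf{Step 2: identify the admissible $f$.} Every $c\in L$ has the form $c=ec$, so $\mu_f$ constant is equivalent to $f$ constant, i.e. $f\in\{{\bf 0},{\bf 1}\}$. This is the same loop argument used for injectivity in Theorem~\ref{theorem:compatible_isomorphism}. The condition involves no restriction on $\psi$. Therefore $\Theta^{-1}(\mathcal{SC}^2_\delta(L))=\mathcal{Z}^2(L)\times\{{\bf 0},{\bf 1}\}$, and restricting $\Theta$ gives the claimed isomorphism.

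\textbf{Expected obstacle.} The only point needing care is that $\Theta$ maps this subspace \emph{onto} $\mathcal{SC}^2_\delta(L)$. Since $\Theta$ is surjective onto $\mathcal{C}^2_\delta(L)$, every symmetric pair has a preimage $(\psi,f)$. Steps 1 and 2 then force $f$ to be constant, which gives surjectivity.

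As a sanity check, the explicit images should be $\Theta_{\psi,{\bf 0}}=(\psi,\psi)$ and, for $f={\bf 1}$, $\delta_{\bf 1}={\bf 1}$ and $\nu_{\bf 1}={\bf 0}$, so $\Theta_{\psi,{\bf 1}}=(\psi+{\bf 1},\psi)$. This matches the example $({\bf 0},{\bf 1})$ up to order.
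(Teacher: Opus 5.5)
Your proposal is correct and follows essentially the same route as the paper. Both use the third statement of Lemma~\ref{lemma:compatible} together with $\mu_f=\Psi+\Phi$ from the proof of Theorem~\ref{theorem:compatible_isomorphism} to show that symmetric pairs correspond under $\Theta$ exactly to constant $f$, that is, to $\mathcal{Z}^2(L)\times\{{\bf 0},{\bf 1}\}$. Your write-up spells out the preimage and both inclusions more explicitly than the paper's brief argument.
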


\begin{proof} Let $(\Psi,\Phi)\in \mathcal{SC}^2_\delta(L)$. The third statement of Lemma \ref{lemma:compatible} implies that $(\Psi,\Phi)$ is symmetric if and only if $\Psi+\Phi$ is constant. From the proof of Theorem \ref{theorem:compatible_isomorphism} and \eqref{eq:mu_f}, this is uniquely related to a pair $(\psi,f)\in \mathcal{Z}^2(L)\times \mathcal{C}^1(L)$ such that $\Theta_{\psi,f}=(\Psi,\Phi)$, with $\mu_f=\Psi+\Phi$. This implies that $f$ must be constant, and the result holds. \hfill $\Box$
\end{proof}

We finish this section by showing how the algebraic symmetries of the loop $L$, which are described by its automorphism group $\mathrm{Aut}(L)$, preserve the group $\mathcal{C}^2_\delta(L)$. Recall here that an {\em automorphism} of $L$ is a bijection $\pi:L\to L$ such that $\pi(a)\pi(b)=\pi(ab)$ for all $a,b\in L$. The group $\mathrm{Aut}(L)$ acts on the group of $k$-cochains over $L$ as $\pi\cdot f := f^\pi$ for all $\pi\in\mathrm{Aut}(L)$ and $f\in \mathcal{C}^k(L,\mathbb{Z}_2)$, where $f^\pi\in \mathcal{C}^k(L,\mathbb{Z}_2)$ is described so that
    \begin{equation}\label{eq_action}
    f^\pi(a_1,\ldots,a_k) := f(\pi(a_1),\ldots,\pi(a_k))
    \end{equation}
    for all $a_1,\ldots,a_k\in L$.

\begin{proposition}\label{prop:theta_equivariance} It is verified that
\[ \Theta(\psi^\pi, f^\pi)= (\Theta(\psi, f))^\pi\]
for all $(\psi, f) \in \mathcal{Z}^2(L) \times \mathcal{C}^1(L)$ and $\pi\in \mathrm{Aut}(L)$.
\end{proposition}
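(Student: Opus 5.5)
The plan is to expand both sides componentwise and reduce the statement to two commutation facts. The first is that the action of $\pi$ commutes with the coboundary operator $\delta$. The second is that it commutes with the map $\nu$ of \eqref{eq_nu}.

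The action \eqref{eq_action} on pairs is understood componentwise, $(\Psi,\Phi)^\pi:=(\Psi^\pi,\Phi^\pi)$. For $(\psi,f)\in\mathcal{Z}^2(L)\times\mathcal{C}^1(L)$ the definition \eqref{eq_theta} gives
\[(\Theta(\psi,f))^\pi=\big((\psi+\delta_f)^\pi,(\psi+\nu_f)^\pi\big)=\big(\psi^\pi+(\delta_f)^\pi,\ \psi^\pi+(\nu_f)^\pi\big),\]
since $g\mapsto g^\pi$ is clearly additive on cochains. On the other hand,
\[\Theta(\psi^\pi,f^\pi)=\big(\psi^\pi+\delta_{f^\pi},\ \psi^\pi+\nu_{f^\pi}\big).\]
This is defined only if $\psi^\pi\in\mathcal{Z}^2(L)$, so that is the first thing to check.

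\textbf{Step 1: $\delta$ commutes with the action.} For any $k$-cochain $g$ and $a_1,\ldots,a_{k+1}\in L$, evaluate $\delta_{g^\pi}$ via \eqref{eq:coboundary}. Each term $g^\pi(\ldots,a_ia_{i+1},\ldots)$ equals $g(\ldots,\pi(a_ia_{i+1}),\ldots)=g(\ldots,\pi(a_i)\pi(a_{i+1}),\ldots)$, because $\pi$ is an automorphism. Hence
\[\delta_{g^\pi}(a_1,\ldots,a_{k+1})=\delta_g(\pi(a_1),\ldots,\pi(a_{k+1}))=(\delta_g)^\pi(a_1,\ldots,a_{k+1}),\]
that is, $\delta_{g^\pi}=(\delta_g)^\pi$. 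Taking $k=2$ and $g=\psi$ gives $\delta_{\psi^\pi}=(\delta_\psi)^\pi=\mathbf 0$, so $\psi^\pi\in\mathcal{Z}^2(L)$ and the left-hand side is well defined. Taking $k=1$ and $g=f$ gives $\delta_{f^\pi}=(\delta_f)^\pi$.

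\textbf{Step 2: $\nu$ commutes with the action.} Directly from \eqref{eq_nu},
\[\nu_{f^\pi}(a,b)=f(\pi(a))+f(\pi(b))=\nu_f(\pi(a),\pi(b))=(\nu_f)^\pi(a,b).\]
No automorphism property is needed here; $\mu_{f^\pi}=(\mu_f)^\pi$ would also hold, but it is not required.

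Substituting Steps 1 and 2 into the two displayed expressions makes them coincide, which proves the statement. There is no real obstacle. The only points requiring care are the well-definedness check $\psi^\pi\in\mathcal{Z}^2(L)$ and making explicit the convention that $\mathrm{Aut}(L)$ acts on $\mathcal{C}^2_\delta(L)$ componentwise. As a byproduct, $(\Theta(\psi,f))^\pi$ lies in $\mathcal{C}^2_\delta(L)$, so $\mathcal{C}^2_\delta(L)$ is $\mathrm{Aut}(L)$-invariant.
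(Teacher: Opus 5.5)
Your proposal is correct and follows essentially the same route as the paper: both show $\nu_{f^\pi}=(\nu_f)^\pi$ and $\delta_{f^\pi}=(\delta_f)^\pi$ (the latter using that $\pi$ is an automorphism) and substitute these into the definition of $\Theta$. Your extra check that $\psi^\pi\in\mathcal{Z}^2(L)$, which makes the left-hand side well defined, is a detail the paper leaves implicit.
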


\begin{proof} Let $(\psi, f) \in \mathcal{Z}^2(L) \times \mathcal{C}^1(L)$ and $\pi\in \mathrm{Aut}(L)$. Then,
\[ \nu_{f^\pi}(a, b) = f^\pi(a) + f^\pi(b) = f(\pi(a)) + f(\pi(b)) = \nu_f(\pi(a), \pi(b)) = (\nu_f)^\pi(a, b), \]
and
{\small\[ \delta_{f^\pi}(a, b) = f^\pi(ab) + f^\pi(a) + f^\pi(b) = f(\pi(a)\pi(b)) + f(\pi(a)) + f(\pi(b)) = \delta_f(\pi(a), \pi(b)) = (\delta_f)^\pi(a, b)\]}
for all $a,b\in L$. Then,
\[\Theta(\psi^\pi, f^\pi) = \left( \psi^\pi + \delta_{f^\pi}, \, \psi^\pi + \nu_{f^\pi} \right) = \left( (\psi + \delta_f)^\pi, \, (\psi + \nu_f)^\pi \right) = \left( \Theta(\psi, f) \right)^\pi.\]
\hfill $\Box$
\end{proof}

\section{Proper multidimensional Hadamard matrices from $\delta$-compatible $2$-cochains over loops}\label{sec:2_loop_cocycles}

In this section, we introduce a construction of proper multidimensional Hadamard matrices from $\delta$-compatible pairs of $2$-cochains over loops, which generalizes the classical construction of multidimensional Hadamard matrices from cocycles described in \eqref{eq:2tod} to the non-associative framework of loops. For each pair $(\psi,\phi)\in\mathcal{C}^2(L)\times \mathcal{C}^2(L)$, and each positive integer $d\geq 2$, we define the $d$-dimensional array $H_{\psi,\phi,d}$ so that, for each $a_1,\ldots,a_d\in L$,
\begin{equation}\label{eq:habc}
H_{\psi,\phi,d}[a_1,\ldots,a_d]:=\prod_{i=2}^{d-1} M_\psi\left[p_{i-1},a_i\right]\cdot M_{\phi}\left[p_{d-1},a_d\right]
\end{equation}
where $p_1:=a_1$ and $p_i:=p_{i-1}a_i$, whenever $2\leq i<d$. This matrix coincides with that described in \eqref{eq:2tod} whenever $\psi=\phi$.

\begin{lemma}\label{lemma_0a} If $(\psi,\phi)\in\mathcal{C}^2_\delta(L)$, then for each positive integer $k<d$,
{\small \begin{align*}
\phantom{=}\, & H_{\psi,\phi,d}[a_1,\ldots,a_{k-1},a,a_{k+1},\ldots,a_d]=\\
=\,& \begin{cases}
    M_\phi\left[a,q_2\right]\cdot \left(\prod_{i=2}^{d-1} M_\psi\left[a_i,q_{i+1}\right]\right), & \text{ if } k=1,\\
\left(\prod_{i=2}^{k-1} M_\psi\left[p_{i-1},a_i\right]\right) \cdot M_\psi\left[p_{k-1},a\right]\cdot M_\phi\left[p_{k-1}a,q_{k+1}\right]\cdot \left(\prod_{i=k+1}^{d-1} M_\psi\left[a_i,q_{i+1}\right]\right), & \text{ if } k>1.
\end{cases}
\end{align*}}
where $q_d:=a_d$ and $q_i:=a_iq_{i+1}$ whenever $k+1\leq i<d$.
\end{lemma}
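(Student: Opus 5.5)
The plan is to pass to the additive picture and then rebracket step by step. Write $H_{\psi,\phi,d}[\ldots]=(-1)^{E}$. With $a$ in position $k$ and $p_i$ the left-nested products, the exponent from \eqref{eq:habc} is
\[E=\sum_{i=2}^{d-1}\psi(p_{i-1},a_i)+\phi(p_{d-1},a_d).\]
The aim is to convert this left-bracketed expression into the mixed bracketing in the statement. The tool is \eqref{eq:psi_phi}, rewritten as
\[\psi(x,y)+\phi(xy,z)=\phi(x,yz)+\psi(y,z).\]
This relation moves the single $\phi$-term one step to the left. Each application changes a product $(xy)z$ into $x(yz)$ inside the $\phi$ argument, and it leaves a $\psi(y,z)$ term whose second argument is right-nested.

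I will prove the claim by a descending induction. Let $P_j$ denote the left-nested product of the first $j$ entries and $Q_{j}$ the right-nested product $a_j(a_{j+1}(\cdots a_d))$, with the $k$-th entry equal to $a$. For $j=d-1,d-2,\ldots,k$, the intermediate claim is
\[E=\sum_{i=2}^{j-1}\psi(P_{i-1},a_i)+\psi(P_{j-1},a_j)\cdot[j\ge 2]+\phi(P_j,\,Q_{j+1})+\sum_{i=j+1}^{d-1}\psi(a_i,Q_{i+1}).\]
Here I need to double-check the indexing, so that the $\phi$ term reads $\phi(P_{j},Q_{j+1})$ and the base case $j=d-1$ is exactly the definition, since $Q_d=a_d$.

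The inductive step from $j$ to $j-1$ applies the compatibility relation with $x=P_{j-1}$, $y=a_j$ and $z=Q_{j+1}$. Since $P_j=P_{j-1}a_j$, this gives
\[\psi(P_{j-1},a_j)+\phi(P_j,Q_{j+1})=\phi(P_{j-1},Q_j)+\psi(a_j,Q_{j+1}).\]
This is exactly the rewrite needed. I stop at $j=k$, because the statement keeps $\psi(p_{k-1},a)$ and $\phi(p_{k-1}a,q_{k+1})$ unexpanded. That is the case $k>1$, where the remaining left-nested $\psi$ terms involve only $a_1,\ldots,a_{k-1}$ together with $\psi(p_{k-1},a)$.

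For $k=1$ I continue the induction down to $j=1$. The last step uses $x=a$, $y=a_2$ and $z=q_3$, and yields $\phi(a,q_2)+\psi(a_2,q_3)$. Exponentiating then gives the stated product. The only delicate point is bookkeeping: the $q_i$ are defined only for $i\ge k+1$, plus $q_2$ when $k=1$, and the index ranges must match, in particular the empty product conventions when $k=d-1$ or $k=2$. I expect this indexing, not any algebra, to be the main obstacle, since each step uses only the single identity \eqref{eq:psi_phi}.
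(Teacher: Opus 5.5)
Your proof is correct and is the same argument as the paper's. The paper's proof simply says to apply \eqref{eq:psi_phi} repeatedly, starting from the rightmost two factors of \eqref{eq:habc} and moving left to position $k$. Your descending induction on $j$ writes that argument out explicitly, and the indexing you wanted to double-check is right: the base case is $\phi(P_{d-1},Q_d)$ with $Q_d=a_d$, each step rewrites $\psi(P_{j-1},a_j)+\phi(P_j,Q_{j+1})$ as $\phi(P_{j-1},Q_j)+\psi(a_j,Q_{j+1})$, and stopping at $j=k$ (or at $j=1$ when $k=1$) gives both cases of the statement.
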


\begin{proof} The result holds by repeatedly applying \eqref{eq:psi_phi} in \eqref{eq:habc} starting with the rightmost two factors and proceeding successively to the position $k$.\hfill $\Box$
\end{proof}

The following results generalize the construction of $d$-dimensional Hadamard matrices described in \eqref{eq:2tod}. They show that associativity is not required to construct proper $d$-dimensional Hadamard matrices from $\delta$-compatible pairs of $2$-cochains over loops. A preliminary lemma is required to this end.

\begin{lemma}\label{lemma:permutation} For any finite sequence of fixed elements $a_1,\ldots,a_r\in L$, where $r$ is a positive integer, and any choice of side (left/right) in each step, the mapping obtained by composing the corresponding translations is a permutation of $L$.
\end{lemma}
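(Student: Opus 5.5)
The plan is to reduce the statement to the defining property of a loop, namely that the left translation $L_a\colon x\mapsto ax$ and the right translation $R_a\colon x\mapsto xa$ are bijections of $L$ for every fixed $a\in L$, and then induct on $r$.

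First, for a fixed $a\in L$, the map $L_a$ is injective and surjective because the equation $ax=b$ has a unique solution $x\in L$ for every $b\in L$. Likewise, $R_a$ is a bijection because $ya=b$ has a unique solution $y$ for every $b$. Since $L$ is finite, either injectivity or surjectivity alone would suffice, but the loop axiom already gives both.

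Next, I would formalize the composed mapping. A choice of sides is a sequence $\varepsilon_1,\ldots,\varepsilon_r\in\{\mathrm{left},\mathrm{right}\}$. The mapping is $T=T_r\circ\cdots\circ T_1$, where $T_i=L_{a_i}$ if $\varepsilon_i=\mathrm{left}$ and $T_i=R_{a_i}$ otherwise. The order of composition is irrelevant to the argument: any order of application also gives a composition of these $r$ maps.

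The induction on $r$ is then as follows. The case $r=1$ is the first step. For the inductive step, $T_{r}\circ(T_{r-1}\circ\cdots\circ T_1)$ is a composition of two bijections of $L$, hence a bijection. Because $L$ is finite, this is a permutation of $L$. Its inverse is $T_1^{-1}\circ\cdots\circ T_r^{-1}$, where the inverses are given by the left and right divisions $x\mapsto a\backslash x$ and $x\mapsto x/a$.

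There is no real obstacle here. The only point needing care is to state precisely what ``composing translations'' means, namely the nested expressions like $p_i=p_{i-1}a_i$ and $q_i=a_iq_{i+1}$ used in Lemma~\ref{lemma_0a}. Those nested products are genuine compositions of translations in the single free variable, with all other elements fixed. Associativity is never needed, since each step applies exactly one multiplication to the current value.
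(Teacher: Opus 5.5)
Your proof is correct and follows the same route as the paper: left and right translations are bijections because $ax=b$ and $ya=b$ are uniquely solvable, and a finite composition of bijections of $L$ is a permutation of $L$. Your induction on $r$ and your remark that associativity is never used make explicit what the paper's one-line argument takes for granted.
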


\begin{proof} The result follows readily from the fact that the binary operation of any finite loop has left- and right-divisions, so left and right translations are permutations in any loop. \hfill $\Box$
\end{proof}

\begin{theorem}\label{theorem_0a} If $(\psi,\phi)\in\mathcal{C}^2_\delta(L)$, then every two-dimensional section of $H_{\psi,\phi,d}$ is Hadamard equivalent to $M_{\phi}$.
\end{theorem}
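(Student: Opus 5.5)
The plan is to fix two coordinate positions $k<l$ in $[d]$ and all the remaining entries $a_i$ ($i\neq k,l$). We write $x$ for the free entry in position $k$ and $y$ for the free entry in position $l$. The goal is to show that the resulting $n\times n$ section $S[x,y]$ has the form
\[S[x,y]=r(x)\,c(y)\,M_\phi[\sigma(x),\tau(y)],\]
where $r,c\colon L\to\{\pm1\}$ and $\sigma,\tau$ are permutations of $L$. Such a section is obtained from $M_\phi$ by permuting rows and columns and negating some of them, so it is Hadamard equivalent to $M_\phi$. Throughout, the permutation property of $\sigma$ and $\tau$ comes from Lemma~\ref{lemma:permutation}, since each is a composition of left or right translations by fixed elements.

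\emph{Case $l=d$.} Work directly from \eqref{eq:habc}. The products $p_i$ are left-nested, so $p_{i}$ does not depend on $x$ for $i<k$ and depends only on $x$ for $i\ge k$. Concretely, $p_i=(\cdots((p_{k-1}x)a_{k+1})\cdots)a_i$, and for $k=1$ simply $p_1=x$. The factors $M_\psi[p_{i-1},a_i]$ with $i<k$ are therefore constants. The factor $M_\psi[p_{k-1},x]$ and the factors $M_\psi[p_{i-1},a_i]$ with $k<i<d$ depend only on $x$, and their product is $r(x)$. The last factor is $M_\phi[p_{d-1},y]$ with $p_{d-1}=\sigma(x)$, where $\sigma$ is a composition of translations. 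We get $S[x,y]=r(x)M_\phi[\sigma(x),y]$.

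\emph{Case $l<d$.} Here \eqref{eq:habc} contains the factor $M_\psi[p_{l-1},y]$ together with later factors in which $y$ is buried inside $p_i$, so we first transport the $\phi$-factor to position $l$. Applying Lemma~\ref{lemma_0a} with index $l$ gives
\[S[x,y]=\Big(\prod_{i=2}^{l-1}M_\psi[p_{i-1},a_i]\Big)\cdot M_\psi[p_{l-1},y]\cdot M_\phi[p_{l-1}y,\,q_{l+1}]\cdot\prod_{i=l+1}^{d-1}M_\psi[a_i,q_{i+1}].\]
The $q$'s involve only fixed entries, so the last product is constant. In the first product, the $a_i$ are fixed for $i\ne k$ and the factor $M_\psi[p_{k-1},x]$ appears at $i=k$; as in the previous case this product is a constant times a function of $x$ alone.

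The main obstacle is the middle pair $M_\psi[p_{l-1},y]\,M_\phi[p_{l-1}y,q_{l+1}]$, which mixes $x$ (through $p_{l-1}$) and $y$. We resolve it with one more application of the compatibility identity \eqref{eq:psi_phi}, taking $a=p_{l-1}$, $b=y$, $c=q_{l+1}$:
\[M_\psi[p_{l-1},y]\,M_\phi[p_{l-1}y,q_{l+1}]=M_\phi[p_{l-1},\,yq_{l+1}]\,M_\psi[y,q_{l+1}].\]
Now $M_\psi[y,q_{l+1}]=c(y)$ depends only on $y$, $p_{l-1}=\sigma(x)$, and $yq_{l+1}=\tau(y)$ with $\tau$ the right translation by $q_{l+1}$. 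This yields the required form $S[x,y]=r(x)\,c(y)\,M_\phi[\sigma(x),\tau(y)]$. The boundary cases $k=1$ and $l=k+1$ are covered by the same computation, using the $k=1$ branch of Lemma~\ref{lemma_0a}. The only real work is this careful bookkeeping of which factors depend on which variable, which is possible because the nesting in \eqref{eq:habc} is fixed and left-to-right.
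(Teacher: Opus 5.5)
Your proof is correct and follows the same strategy as the paper. The case $l=d$ is read off directly from \eqref{eq:habc}. For $l<d$, the compatibility identity \eqref{eq:psi_phi} moves the single $\phi$-factor between the two free positions, so that each free variable enters only through its own sign factors and a composition of translations.

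The only difference is the split point. The paper applies Lemma~\ref{lemma_0a} at index $k$, obtaining $M_\phi[p_{k-1}a_k,q_{k+1}]$, which requires a separate $k=1$ subcase. Your application at index $l$, followed by one more use of \eqref{eq:psi_phi}, is exactly Lemma~\ref{lemma_0a} at index $l-1$: it gives $M_\phi[p_{l-1},q_l]$ and handles $k=1$ uniformly. Consequently, your closing remark about invoking the $k=1$ branch is unnecessary.
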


\begin{proof} Since $H_{\psi,\phi,2}=M_{\phi}$, the result holds trivially for $d=2$. So, we assume from now on that $d>2$. For each pair of distinct positive integers $k,l\leq d$, with $k<l$, let $a_1,\ldots,a_{k-1},a_{k+1},\ldots,$ $a_{l-1},a_{l+1},\ldots,a_d\in L$. We claim that the two-dimensional matrix $H$, which is described so that
\[H[a_k,a_l]=H_{\psi,\phi}\left[a_1,\ldots,a_d\right]\]
for all $a_k,a_l\in L$, is Hadamard equivalent to $M_{\phi}$. To prove it, we distinguish a study of cases depending on the positions of $k$ and $l$.

First, we assume $l=d$. From \eqref{eq:habc},
\[H[a_k,a_d]= \epsilon(a_k) \cdot M_{\phi}\left[p_{d-1},a_d\right]\]
for all $a_k,a_d\in L$, where
\[\epsilon(a_k):=\left(\prod_{i=2}^{d-1} M_\psi\left[p_{i-1},a_i\right]\right)\in \{\pm 1\}\]
is a factor that depends only on $a_k$. In addition, the map $a_k\mapsto p_{d-1}$ is a permutation of $L$ from Lemma \ref{lemma:permutation}. Hence, $H$ is Hadamard equivalent to $M_{\phi}$.

Second, we assume $k=1$ and $l<d$. From Lemma \ref{lemma_0a},
\[H[a_1,a_l]= M_{\phi}\left[a_1,q_2\right]\cdot \eta(a_l)\]
for all $a_1,a_l\in L$, where \begin{equation}\label{eq:g}\eta(a_l):=\left(\prod_{i=k+1}^{d-1} M_\psi\left[a_i,q_{i+1}\right]\right)\in \{\pm 1\}
\end{equation}
is a factor that depends only on $a_l$. In addition, the map $a_l\mapsto q_2$ is a permutation of $L$ from Lemma \ref{lemma:permutation}. Hence, $H$ is Hadamard equivalent to $M_{\phi}$.

Finally, we assume $k>1$ and $l<d$. From Lemma \ref{lemma_0a},
\[H[a_k,a_l]= \epsilon(a_k) \cdot M_{\phi}\left[p_{k-1}a_k,q_{k+1}\right]\cdot \eta(a_l)\]
for all $a_k,a_l\in L$, where
\[\epsilon(a_k):=\left(\prod_{i=2}^{k-1} M_\psi\left[p_{i-1},a_i\right]\right)\in \{\pm 1\}\]
and $\eta(a_l)$, described as in \eqref{eq:g}, are two factors that depend, respectively, only on $a_k$ and $a_l$. In addition, both maps $a_k\mapsto p_{k-1}a_k$ and $a_l\mapsto q_{k+1}$ are permutations of $L$ from Lemma \ref{lemma:permutation}. Hence, $H$ is Hadamard equivalent to $M_{\phi}$. \hfill $\Box$
\end{proof}

\begin{corollary}\label{corollary_0} If the matrix $M_\phi$ is Hadamard, then $H_{\psi,\phi,d}$ is proper Hadamard.
\end{corollary}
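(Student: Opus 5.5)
The plan is to read the statement off Theorem~\ref{theorem_0a} together with the definition of a proper $d$-dimensional Hadamard matrix. Implicitly we take $(\psi,\phi)\in\mathcal{C}^2_\delta(L)$, which is the standing hypothesis of Theorem~\ref{theorem_0a}. We also assume $d\geq 2$; the case $d=2$ is immediate because $H_{\psi,\phi,2}=M_\phi$.

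Properness asks the following. Fix two directions $k<l$ and fix all the remaining coordinates. The resulting planar section, viewed as an $n\times n$ matrix indexed by $(a_k,a_l)$, must have mutually orthogonal rows. It must also have mutually orthogonal columns, since orientations are taken in both axes of the section. Both requirements say exactly that the section is an $n\times n$ Hadamard matrix.

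By Theorem~\ref{theorem_0a}, every such section $H$ is Hadamard equivalent to $M_\phi$. Its proof gives this equivalence explicitly in the form
\[H[a_k,a_l]=\epsilon(a_k)\,M_\phi[\sigma(a_k),\tau(a_l)]\,\eta(a_l).\]
Here $\sigma,\tau$ are permutations of $L$ (Lemma~\ref{lemma:permutation}), and $\epsilon,\eta$ take values in $\{\pm1\}$. In the case $l=d$ we take $\eta\equiv 1$ and $\tau=\mathrm{id}$, and in the case $k=1$ we take $\epsilon\equiv 1$ and $\sigma=\mathrm{id}$. So $H=D_1P_1M_\phi P_2D_2$, where $D_1,D_2$ are diagonal $\pm1$ matrices and $P_1,P_2$ are permutation matrices.

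It remains to check that Hadamard equivalence preserves the Hadamard property. Since $M_\phi M_\phi^T=nI_n$, we get
\[HH^T=D_1P_1M_\phi P_2D_2D_2^TP_2^TM_\phi^TP_1^TD_1^T=nD_1P_1P_1^TD_1=nI_n.\]
Because $H$ is square, $HH^T=nI_n$ also gives $H^TH=nI_n$. Therefore both rows and columns of every planar section are orthogonal with norm $n$. This is precisely the properness condition for every pair $k,l$ and every choice of the fixed coordinates.

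The only delicate point is making sure that the case analysis of Theorem~\ref{theorem_0a} covers every pair $k<l$, including $k=1$ with $l=d$. In that case, either the first case ($l=d$) or the second case ($k=1$) applies; the first one applies directly from \eqref{eq:habc}. Note also that properness implies the improper condition, since each $(d-1)$-dimensional section inner product is a sum of planar line inner products. This is consistent with, though not required by, the statement.
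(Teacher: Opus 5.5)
Your proposal is correct and takes the same approach as the paper, which also obtains the corollary directly from Theorem~\ref{theorem_0a}: every planar section is Hadamard equivalent to $M_\phi$, and Hadamard equivalence preserves $HH^T=nI_n$. Your factorization $H=D_1P_1M_\phi P_2D_2$ and the check of row and column orthogonality spell out what the paper calls immediate; the statement of Theorem~\ref{theorem_0a} already gives that factorization by the definition of Hadamard equivalence, so you do not need to reopen its proof or its case analysis.
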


\begin{proof} The result follows readily from Theorem \ref{theorem_0a}. \hfill $\Box$
\end{proof}

\vspace{0.2cm}

Unlike the multidimensional Hadamard matrices based on $2$-cocycles described by \citealp{Horadam1998}, the $2$-cochain $\phi$ need not be a $2$-cocycle. The following example illustrates this fact.

\begin{example}\label{example_1} It is readily verified that the matrix $M_\phi$ in Example \ref{example_0} is Hadamard. However, $\phi\not\in\mathcal{Z}^2(L)$. Thus, for instance,
\[\phi(1,1)+\phi(1\cdot 1,2)=\phi(1,1)+\phi(1,2)=0+1=1\neq 0=\phi(1,2)+\phi(1,2)=\phi(1,1\cdot 2)+\phi(1,2).\]
Corollary \ref{corollary_0} ensures that the $d$-dimensional matrix $H_{\psi,\phi,d}$, with any $d\geq 2$, is proper Hadamard. Figure \ref{fig:example_0} describes the case $d=3$, where white cells represent the symbol $1$, and black ones the symbol $-1$.
\end{example}

\begin{figure}[htbp]
	\centering
\resizebox{0.3\textwidth}{!}{
\begin{tikzpicture}[
    x={(0.65cm,-0.22cm)},
    y={(0.42cm,0.28cm)},
    z={(0cm,1.2cm)},
    scale=0.55
]

  \drawtensorlayer{1}{
     1,-1,-1, 1,-1,-1, 1, 1,
    -1, 1, 1,-1,-1,-1, 1, 1,
    -1, 1,-1, 1, 1, 1, 1, 1,
     1,-1, 1,-1, 1, 1, 1, 1,
    -1,-1, 1, 1,-1, 1, 1,-1,
    -1,-1, 1, 1, 1,-1,-1, 1,
     1, 1, 1, 1,-1, 1,-1, 1,
     1, 1, 1, 1, 1,-1, 1,-1
  }

  \drawtensorlayer{2}{
    -1, 1, 1,-1,-1,-1, 1, 1,
     1,-1,-1, 1,-1,-1, 1, 1,
     1,-1, 1,-1, 1, 1, 1, 1,
    -1, 1,-1, 1, 1, 1, 1, 1,
    -1,-1, 1, 1, 1,-1,-1, 1,
    -1,-1, 1, 1,-1, 1, 1,-1,
     1, 1, 1, 1, 1,-1, 1,-1,
     1, 1, 1, 1,-1, 1,-1, 1
  }

  \drawtensorlayer{3}{
    -1, 1,-1, 1, 1, 1, 1, 1,
     1,-1, 1,-1, 1, 1, 1, 1,
    -1, 1, 1,-1, 1, 1,-1,-1,
     1,-1,-1, 1, 1, 1,-1,-1,
     1, 1, 1, 1,-1, 1,-1, 1,
     1, 1, 1, 1, 1,-1, 1,-1,
     1, 1,-1,-1, 1,-1,-1, 1,
     1, 1,-1,-1,-1, 1, 1,-1
  }

  \drawtensorlayer{4}{
     1,-1, 1,-1, 1, 1, 1, 1,
    -1, 1,-1, 1, 1, 1, 1, 1,
     1,-1,-1, 1, 1, 1,-1,-1,
    -1, 1, 1,-1, 1, 1,-1,-1,
     1, 1, 1, 1, 1,-1, 1,-1,
     1, 1, 1, 1,-1, 1,-1, 1,
     1, 1,-1,-1,-1, 1, 1,-1,
     1, 1,-1,-1, 1,-1,-1, 1
  }

  \drawtensorlayer{5}{
    -1,-1, 1, 1,-1, 1,-1, 1,
    -1,-1, 1, 1, 1,-1, 1,-1,
     1, 1, 1, 1,-1, 1, 1,-1,
     1, 1, 1, 1, 1,-1,-1, 1,
    -1, 1, 1,-1, 1, 1, 1, 1,
     1,-1,-1, 1, 1, 1, 1, 1,
    -1, 1,-1, 1, 1, 1,-1,-1,
     1,-1, 1,-1, 1, 1,-1,-1
  }

  \drawtensorlayer{6}{
    -1,-1, 1, 1, 1,-1, 1,-1,
    -1,-1, 1, 1,-1, 1,-1, 1,
     1, 1, 1, 1, 1,-1,-1, 1,
     1, 1, 1, 1,-1, 1, 1,-1,
     1,-1,-1, 1, 1, 1, 1, 1,
    -1, 1, 1,-1, 1, 1, 1, 1,
     1,-1, 1,-1, 1, 1,-1,-1,
    -1, 1,-1, 1, 1, 1,-1,-1
  }

  \drawtensorlayer{7}{
     1, 1, 1, 1, 1,-1,-1, 1,
     1, 1, 1, 1,-1, 1, 1,-1,
     1, 1,-1,-1,-1, 1,-1, 1,
     1, 1,-1,-1, 1,-1, 1,-1,
     1,-1, 1,-1, 1, 1,-1,-1,
    -1, 1,-1, 1, 1, 1,-1,-1,
    -1, 1, 1,-1,-1,-1,-1,-1,
     1,-1,-1, 1,-1,-1,-1,-1
  }

  \drawtensorlayer{8}{
     1, 1, 1, 1,-1, 1, 1,-1,
     1, 1, 1, 1, 1,-1,-1, 1,
     1, 1,-1,-1, 1,-1, 1,-1,
     1, 1,-1,-1,-1, 1,-1, 1,
    -1, 1,-1, 1, 1, 1,-1,-1,
     1,-1, 1,-1, 1, 1,-1,-1,
     1,-1,-1, 1,-1,-1,-1,-1,
    -1, 1, 1,-1,-1,-1,-1,-1
  }

  \coordinate (O) at (-1.5, -1.5, -0.5);

  \draw[->, >=stealth, line width=1pt, black] (O) -- (9.5, -1.5, -0.5) node[below right, font=\small\bfseries] {Columns};

  \draw[->, >=stealth, line width=1pt, black] (O) -- (-1.5, 9.5, -0.5) node[above left, font=\small\bfseries] {Rows};

  \draw[->, >=stealth, line width=1pt, black] (O) -- (-1.5, -1.5, 7*3.4 + 2) node[above, font=\small\bfseries] {Slices};
  \foreach \z in {1,...,8} {
    \pgfmathsetmacro{\zpos}{(\z - 1) * 3.4}
    \draw[thick] (-1.5, -1.5, \zpos) -- (-1.8, -1.5, \zpos) node[left, font=\scriptsize] {\z};
  }

\end{tikzpicture}}\\
	\caption{Representation of the $3$-dimensional proper Hadamard matrix $H_{\psi,\phi,3}$ in Example \ref{example_0}.}
	\label{fig:example_0}
\end{figure}

It is worth highlighting the relationship between the $\delta$-compatible pairs of $2$-cochains $(\psi,\phi_{\psi,f})$ described in Lemma \ref{lemma:compatible_a} and the classical associative setting described in \citealp[Proposition~4.1]{Horadam1998}. In the group-cocyclic framework, any arbitrary $1$-cochain $f$ yields a valid candidate matrix $M_{\phi_{\psi,f}}$, because $\delta_\psi=\delta^2(f)=0$ holds trivially for any choice of $f$. In contrast, for non-associative loops, $\delta_\psi$ and $\delta^2(f)$ are non-zero in general, so the loop-cocycle condition forces $f$ to precisely mirror and cancel the associator obstruction of $\psi$, highlighting the non-trivial nature of the loop generalization. The next result shows that the Hadamard condition in $M_{\phi_{\psi,f}}$ in Corollary \ref{corollary_0} is indeed derived from an ordinary $2$-cocycle.

\begin{lemma}\label{lemma:M_psi} The matrix $M_{\phi_{\psi,f}}$ is Hadamard equivalent to the cocyclic matrix $M_{\psi+\delta_f}$. As a consequence, $M_{\phi_{\psi,f}}$ is Hadamard if and only if $M_{\psi +\delta_f}$ is Hadamard.
\end{lemma}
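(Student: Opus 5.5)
The plan is to compare $\phi_{\psi,f}$ and $\psi+\delta_f$ entrywise and show that their difference splits into a part depending only on the row index and a part depending only on the column index. Once that is done, the two matrices differ by multiplying rows and columns by $\pm 1$. By \eqref{eq:psi_f} we have $\phi_{\psi,f}(a,b)=\psi(a,b)+f(ab)$. Using \eqref{eq:coboundary} with $k=1$, the $2$-coboundary is $\delta_f(a,b)=f(a)+f(b)+f(ab)$. Adding the two expressions gives
\[
\phi_{\psi,f}(a,b)+(\psi+\delta_f)(a,b)=f(a)+f(b)=\nu_f(a,b)
\]
for all $a,b\in L$. Equivalently, $\phi_{\psi,f}=(\psi+\delta_f)+\nu_f$, which is just a rearrangement of the identity $\delta_f=\mu_f+\nu_f$ used in the proof of Theorem~\ref{theorem:compatible_isomorphism}.

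Passing to matrices through \eqref{eq:cocyclicHM}, this reads
\[
M_{\phi_{\psi,f}}[a,b]=(-1)^{f(a)}\,M_{\psi+\delta_f}[a,b]\,(-1)^{f(b)}.
\]
Setting $D:=\mathrm{diag}\big((-1)^{f(a)}\big)_{a\in L}$, we obtain $M_{\phi_{\psi,f}}=D\,M_{\psi+\delta_f}\,D$. This means $M_{\phi_{\psi,f}}$ is obtained from $M_{\psi+\delta_f}$ by negating the rows and the columns indexed by the elements $a$ with $f(a)=1$. That is a Hadamard equivalence, with no permutation needed.

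For the consequence, Hadamard equivalence preserves the Hadamard property. Since $D$ is orthogonal, $(DMD)(DMD)^T=DMM^TD=nI_n$ whenever $MM^T=nI_n$, and the converse follows because $D^{-1}=D$. I should also record that $\psi+\delta_f\in\mathcal{Z}^2(L)$, by \eqref{eq:psi_delta}, so $M_{\psi+\delta_f}$ is a genuine cocyclic matrix, as the statement asserts.

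There is no real obstacle here. The only point needing care is getting the formula for $\delta_f$ on $1$-cochains right from \eqref{eq:coboundary} and checking that the $f(ab)$ terms cancel, so that only the separable term $f(a)+f(b)$ remains.
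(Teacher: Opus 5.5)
Your proposal is correct and follows essentially the same route as the paper: expand $\delta_f(a,b)=f(a)+f(b)+f(ab)$ and note that the $f(ab)$ terms cancel. The two matrices then differ by the sign matrix $\mathrm{diag}\big((-1)^{f(a)}\big)_{a\in L}$ on both sides, and you invoke \eqref{eq:psi_delta} for $\psi+\delta_f\in\mathcal{Z}^2(L)$. Your explicit check $(DMD)(DMD)^T=DMM^TD$ just spells out the paper's remark that row and column negations preserve the Hadamard property.
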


\begin{proof} Since $\psi\in \mathcal{Z}_\mathcal{L}^2(L)$, we have from \eqref{eq:psi_delta} that $\psi+\delta_f\in \mathcal{Z}^2(L)$, which implies that the matrix $M_{\psi+\delta_f}$ is $2$-cocyclic over $L$. Furthermore, since $\delta_f(a,b) = f(a) + f(b) + f(ab)$ for all $a,b\in L$, we have
\begin{align*}
    M_{\psi+\delta_f}[a,b] &= (-1)^{\psi(a,b) + \delta_f(a,b)} \\
    &= (-1)^{\psi(a,b) + f(a) + f(b) + f(a \cdot b)} \\
    &= (-1)^{f(a)} \cdot (-1)^{\psi(a,b) + f(a \cdot b)} \cdot (-1)^{f(b)} \\
    &= (-1)^{f(a)} \cdot M_{\phi_{\psi,f}}[a,b] \cdot (-1)^{f(b)}.
\end{align*}
Equivalently,
\[M_{\psi+\delta_f} = D_L \cdot M_{\phi_{\psi,f}} \cdot D_R,\]
where $D_L = \operatorname{diag}\left((-1)^{f(a)}\right)_{a \in L}$ and $D_R = \operatorname{diag}\left((-1)^{f(b)}\right)_{b \in L}$ are diagonal signature matrices with entries in $\{-1, 1\}$. Since $M_{\psi+\delta_f}$ is obtained from $M_{\phi_{\psi,f}}$ solely by row and column negations, both matrices are Hadamard equivalent. The consequence follows because Hadamard equivalence preserves the orthogonality of rows and columns. \hfill $\Box$
\end{proof}

\vspace{0.2cm}

Although the Hadamard property of the matrix $H_{\psi,\phi_{\psi,f},d}$ is ultimately determined by the ordinary cocycle $\psi+\delta_f\in \mathcal{Z}^2(L)$, the multidimensional construction retains the non-associative information encoded by the loop-cocycle $\psi\in\mathcal{Z}_\mathcal{L}^2(L)$. Thus, loop-cocycles are not needed to enlarge the class of fundamental Hadamard matrices, but they provide the natural algebraic framework for their multidimensional realization over non-associative loops. In this regard, Lemma \ref{lemma:M_psi} is particularly relevant from a computational point of view. On the one hand, we have from \eqref{eq:decomposition} that
\[\mathrm{dim}_{\mathbb{F}_2}\left(\mathcal{Z}^k(L)\right)\leq \mathrm{dim}_{\mathbb{F}_2}\left(\mathcal{Z}_\mathcal{L}^k(L)\right)\]
so all potential Hadamard matrices (up to equivalence) are covered by testing a smaller space of pure $2$-cocycles. On the other hand, the Hadamard test is known to be valid in the loop framework for cocycles (see \citealp{Alvarez2019}), but not for loop-cocycles (see \citealp{Falcon26}). In this way, the computational cost of testing whether the underlying matrix is Hadamard is $O(n^2)$, whereas a direct verification of all required orthogonality conditions on the $d$-dimensional array involves $O\left(n^{2(d-1)}\right)$ scalar products.

\section{Computational analysis}\label{sec:computational}

This section focuses on a first computational approach to identify and classify the multidimensional Hadamard matrices described by \eqref{eq:habc}, which arise from $\delta$-compatible pairs of $2$-cochains over a finite loop $L$ of order $n$. A key computational consideration here is the size of the vector space $\mathcal{C}_\delta^2(L)$. According to Theorem \ref{theorem:compatible_isomorphism}, an exhaustive search across the entire domain would require evaluating $2^{\dim_{\mathbb{F}_2} \mathcal{Z}^2(L) + n}$ candidates, which rapidly becomes computationally intractable as $n$ increases. To reduce this exponential complexity, we exploit the isomorphism $\Theta$ described in \eqref{eq_theta}. Every pair $(\Psi,\Phi)\in\mathcal C_\delta^2(L)$ has a unique representation
\[
(\Psi,\Phi)=(\psi+\delta_f,\psi+\nu_f),\]
with $(\psi,f)\in\mathcal Z^2(L)\times\mathcal C^1(L)$. For the multidimensional construction in
\eqref{eq:habc}, the corresponding array satisfies
\[
H_{\psi+\delta_f,\psi+\nu_f,d}[a_1,\ldots,a_d]
=
H_{\psi,\psi,d}[a_1,\ldots,a_d]
\prod_{j=1}^d(-1)^{f(a_j)}.
\]
Equivalently, in matrix form,
\[
H_{\psi+\delta_f,\psi+\nu_f,d}
=
(D_f\otimes\cdots\otimes D_f)H_{\psi,\psi,d}.
\]
where
\[
D_f=\operatorname{diag}\bigl((-1)^{f(a)}:a\in L\bigr).
\]
Thus, the $1$-cochain $f$ only induces sign changes of parallel hyperplanes along the coordinate directions. Consequently, both arrays $H_{\psi+\delta_f,\psi+\nu_f,d}$ and $H_{\psi,\psi,d}$ are equivalent under the multidimensional Hadamard equivalence, which we recall below.

\begin{definition}[\citealp{deLauney2008}] Two $d$-dimensional Hadamard matrices $H$ and $H'$ of order $n$ are {\em Hadamard equivalent} if there exist, for each $i\in\{1,\ldots,d\}$, a permutation
$\pi_i$ of $L$ and a function
$\epsilon_i:L\longrightarrow\{\pm1\}$ such that
\[
H'[a_1,\ldots,a_d]
=
\left(\prod_{i=1}^d\epsilon_i(a_i)\right)
H[\pi_1(a_1),\ldots,\pi_d(a_d)]\]
for all $a_1,\ldots,a_d\in L$. Equivalently, if $P_i$ is a permutation matrix and $D_i$ is a
diagonal matrix with diagonal entries in $\{\pm1\}$, then
\[
H'=
(D_1P_1)\otimes\cdots\otimes(D_dP_d)\,H.
\]
\end{definition}

Consequently, for the purpose of classifying the resulting $d$-dimensional Hadamard matrices up to multidimensional Hadamard equivalence, it is sufficient to consider the pairs $(\psi,\psi)$ with $\psi\in\mathcal Z^2(L)$. The $1$-cochains $f \in \mathcal{C}^1(L)$ simply populate existing equivalence classes with isomorphic matrix instances. Fixing $f = 0$ reduces the computational search space from $|\mathcal{Z}^2(L)| \cdot 2^n$ to simply $|\mathcal{Z}^2(L)|$. This eliminates a redundant factor of $2^n$ from the search space while guaranteeing that no distinct $d$-dimensional Hadamard equivalence class is overlooked.

We have implemented a systematic search framework in the {\sc GAP} system {\em (Groups, Algorithms, Programming)} (see \citealp{GAP}) for the computation of $3$-dimensional Hadamard equivalence classes based on $\delta$-compatible pairs of $2$-cochains over all groups of order $4$ and $8$, and the non-associative loop described in Example \ref{example_0}. More precisely, for a given finite loop $L$ of order $n$, the implementation proceeds in three main stages. First, we determine the vector space $\mathcal{Z}^2(L)$ by solving the linear system of equations \eqref{eq:system_delta} once we impose $x_{a,b}=y_{a,b}$ for all $a,b\in L$. Second, we classify, up to Hadamard equivalence, all those solutions satisfying the Hadamard condition. Third, the corresponding $3$-dimensional arrays are constructed and classified up to Hadamard equivalence.

For the computational classification of Hadamard matrices, we have implemented in the GRAPE package \textsc{GRAPE} (see \citealp{GRAPE}) in {\sc GAP} a natural generalization of the classical construction described by \citealp{McKay1979}, which reduces the Hadamard equivalence to a coloured graph isomorphism. (We refer to \citealp{Ostergaard2026} for a recent implementation to classify complex Hadamard matrices.) More precisely, for a $d$-dimensional Hadamard array, we associate two signed vertices with each index in each coordinate direction and encode each array entry by the $2^{d-1}$ sign patterns whose product agrees with the corresponding entry. For $d=2$, this reduces to the classical McKay graph representation, while the higher-dimensional construction provides a natural graph encoding of the multidimensional equivalence considered in this paper. The graph representation is used only as a computational device for testing equivalence. The mathematical results concerning the construction of the arrays and the reduction of the search space are established in the preceding sections. In any case, our implementation keeps independent verification routines for validation purposes. Algorithm \ref{alg:multidimensional-hadamard} describes the pseudocode of our procedure.

\begin{algorithm}
\caption{Enumeration and classification of $d$-dimensional Hadamard matrices over a finite loop}
\label{alg:multidimensional-hadamard}
\begin{algorithmic}[1]
\Require A finite loop $L$ of order $n$, and a dimension $d\geq 2$.
\Ensure The set $\mathcal{R}$ of representatives of $d$-dimensional Hadamard equivalence classes arising from $\mathcal{C}_\delta^2(L)$.

\State Construct $\mathcal{Z}^2(L)$ from the linear system over $\mathbb{F}_2$.
\State Initialize $\mathcal{R}=\emptyset$.

\ForAll{$\psi\in \mathcal{Z}^2(L)$}
    \State Construct the cocyclic matrix $M_\psi$.
    \If{$M_\psi$ is Hadamard}
        \State Construct the $d$-dimensional array $H_{\psi,\psi,d}$ and its associated coloured graph.
        \If{$H_{\psi,\psi,d}$ is not equivalent to any representative in $\mathcal{R}$}
            \State Add $H_{\psi,\psi,d}$ to $\mathcal{R}$.
        \EndIf
    \EndIf
\EndFor
\State \Return $\mathcal{R}$.
\end{algorithmic}
\end{algorithm}

To ensure full reproducibility of the computational results, the complete GAP/GRAPE implementation used in this work is provided as supplementary material accompanying the article. The procedure is applied independently to each loop under consideration. The resulting representatives are then compared across different loops, again using graph isomorphisms. Table~\ref{tab:gap} summarizes the computational results obtained with our {\sc GAP}/{\sc GRAPE} implementation of Algorithm~\ref{alg:multidimensional-hadamard} (with $d=3$) for all groups of order $4$ and $8$, together with the non-associative loop of order $8$ described in Example \ref{example_0}. The column $\dim_{\mathbb F_2}\mathcal Z^2(L)$ gives the dimension of the space of $2$-cocycles over $L$. The column {\em ``2D Hadamard matrices''} indicates the total number of cocyclic Hadamard matrices obtained from Algorithm~\ref{alg:multidimensional-hadamard}. The columns {\em ``2D classes''} and {\em ``3D classes''} give the number of equivalence classes obtained through the graph-isomorphism classification. The column {\em ``Time''} reports the total CPU execution time in seconds. All experiments were performed on a workstation equipped with a {\em $13^{\mathrm{th}}$ Gen Intel Core i9-13900H CPU @ 2.60GHz with 32 GB of RAM}. Note in particular that no non-trivial cocyclic Hadamard matrix was found over the cyclic group $\mathbb{Z}_8$ and the quaternion group $\mathcal Q_8$. Among the remaining examples, the elementary abelian group $\mathbb Z_2\times \mathbb Z_2\times \mathbb Z_2$ yields the richest family of multidimensional Hadamard matrices, producing 336 cocyclic Hadamard matrices distributed into four inequivalent three-dimensional classes.

The computational data show that the proposed multidimensional construction yields a strict refinement of the classical cocyclic Hadamard classification. While all cocyclic Hadamard matrices of orders $4$ and $8$ (among the loops that do produce at least one such matrix) collapse into a single global equivalence class in dimension two, the associated three-dimensional arrays split into four and eleven non-equivalent classes, respectively. Moreover, no two three-dimensional Hadamard matrices arising from different loops were found to be Hadamard equivalent. Consequently, the three-dimensional construction distinguishes all loop sources occurring in our experiments, including the non-associative loop of Example~\ref{example_0}. In particular, the latter produces a three-dimensional Hadamard equivalence class that does not occur for any group of order $8$.

\renewcommand{\tabcolsep}{4pt}
\begin{table}[htbp]
\centering
\caption{Computational results.} \label{tab:gap}
{\scriptsize 
\begin{tabular}{clcccccr} \hline
Order & Loop $L$ & $\dim_{\mathbb F_2}\mathcal Z^2(L)$ & 2D Hadamard matrices & 2D classes  & 3D classes & Global 3D classes & Time (s) \\ \hline
4 & $\mathbb Z_4$ & 4 & 4 & 1 & 1 & $G_{4.1}$ & 0.281 \\
& $\mathbb{Z}_2\times\mathbb{Z}_2$ & 5 & 12 & 1 & 3 & $G_{4.2}$ - $G_{4.4}$ & 1.094 \\ \hline
8 & $\mathbb Z_8$ & 8 & 0 & 0 & 0 & & 0.000 \\
& $\mathbb Z_4\times \mathbb Z_2$ & 9 & 32 & 1 & 3 & $G_{8.1}$ - $G_{8.3}$& 24.484 \\
& $\mathcal{D}_8$ & 9 & 64 & 1 & 3 & $G_{8.4}$ - $G_{8.6}$& 76.609\\
& $\mathcal{Q}_8$ & 8 & 0 & 0 & 0 & & 0.000\\
& $\mathbb Z_2\times \mathbb Z_2\times \mathbb Z_2$ & 11 & 336 & 1 & 4 & $G_{8.7}$ - $G_{8.10}$&  498.375 \\
& $L$ (Example \ref{example_0}) & 6 & 8 & 1 & 1 & $G_{8.11}$ &  11.312\\
\hline
\end{tabular}}
\end{table}

\section{Conclusion and further work}

In this paper, we have introduced a multidimensional extension of the loop-cocyclic approach to Hadamard matrices. To this end, we have defined the notion of a $\delta$-compatible pair of $2$-cochains over a loop $L$ as a natural extension of the usual $2$-cocycle identity, which allows the use of two distinct $2$-cochains. We have proved that these pairs admit a natural decomposition into a cocyclic component and a free cochain component. This decomposition yields a generalization of the classical group-cocyclic construction of multidimensional Hadamard matrices to arbitrary finite loops while preserving its computational advantages. The obtained decomposition also provides a significant reduction of the search space. More precisely, the determination of multidimensional Hadamard equivalence classes can be restricted to the space of ordinary $2$-cocycles, eliminating a redundant factor of $2^n$. This reduction has enabled the implementation of an efficient computational framework in {\sc GAP}/{\sc GRAPE}, which has been applied to all groups of orders $4$ and $8$, together with a non-associative loop of order $8$. The computational results indicate that the proposed three-dimensional construction captures structural information that is not visible at the level of cocyclic Hadamard matrices. In all cases considered, the two-dimensional classification collapses into a single global equivalence class, whereas the corresponding three-dimensional arrays split into several inequivalent classes. In particular, the non-associative loop considered in this work yields a three-dimensional Hadamard class that does not arise from any of the groups analysed.

Future research will focus on extending the computational classification to larger loop orders and higher dimensions. Another natural direction is the development of a multidimensional analogue of the cocyclic Hadamard test that avoids the explicit construction of multidimensional arrays. The relationship between loop-theoretic properties and the resulting multidimensional Hadamard equivalence classes also deserves further investigation, as the present computational evidence suggests that the three-dimensional construction provides a finer invariant than the associated cocyclic Hadamard matrix.

\section*{Acknowledgements}

The first author has been partially supported by the Research Project {\em ``Modeling small-world, scale-free networks from combinatorial designs based on quasigroup digraphs''} (PPIT-FEDER- SOL2024-31611), co-financed by the EU -- Ministry of Finance and Public Administration -- European Funds -- Andalusian Regional Government -- Ministry of University, Research and Innovation. The second author has partially been supported by the Research Project {\em HADAMARD-CRYPT} (PID2024-160735NB-I00), co-financed by the EU -- Ministry of Finance and Public Administration -- European Funds -- Ministry of University, Research and Innovation.

\bibliographystyle{plainnat}
\bibliography{ref_arxiv}

@misc{alpoge2026,
  author       = {Alp{\"o}ge, L. and Voinov, P. and Reynolds-Haertle, S. and Claude-Anthropic},
  title        = {Explicit construction of {H}adamard matrices for all unknown orders under 2000, including order 668},
  howpublished = {Electronic announcement and source code repository},
  month        = {August},
  year         = {2026},
  note         = {Available online; order 668 verified via FrontierMath benchmarks}
}

@article {Alvarez2015,
    AUTHOR = {\'Alvarez, V. and Armario, J. A. and Frau, M. D. and Real, P.},
     TITLE = {On higher dimensional cocyclic {H}adamard matrices},
   JOURNAL = {Appl. Algebra Engrg. Comm. Comput.},
  FJOURNAL = {Applicable Algebra in Engineering, Communication and
              Computing},
    VOLUME = {26},
      YEAR = {2015},
    NUMBER = {1-2},
     PAGES = {191--206},
       doi={10.1007/s00200-014-0242-3}
}

@article {Alvarez2019,
    AUTHOR = {\'Alvarez, V. and Falc\'on, R. M. and Frau, M. D. and Gudiel,
              F. and G\"uemes, B.},
     TITLE = {Cocyclic {H}adamard matrices over {L}atin rectangles},
   JOURNAL = {European J. Combin.},
  FJOURNAL = {European Journal of Combinatorics},
    VOLUME = {79},
      YEAR = {2019},
     PAGES = {75--96},
       doi={10.1016/j.ejc.2018.12.007}
}

@article{Alvarez2020,
  author    = {{\'A}lvarez, V. and Armario, J. A. and Falc{\'o}n, R. M. and Frau, M. D. and Gudiel, F. and G{\"u}emes, M. B. and Osuna, A.},
  title     = {On Cocyclic {H}adamard Matrices over {G}oethals-{S}eidel Loops},
  journal   = {Mathematics},
  volume    = {8},
  number    = {1},
  pages     = {24},
  year      = {2020},
  doi={10.3390/math8010024}
}

@article {Eilenberg1947,
    AUTHOR = {Eilenberg, S. and MacLane, S.},
     TITLE = {Algebraic cohomology groups and loops},
   JOURNAL = {Duke Math. J.},
  FJOURNAL = {Duke Mathematical Journal},
    VOLUME = {14},
      YEAR = {1947},
     PAGES = {435--463},
       URL = {http://projecteuclid.org/euclid.dmj/1077474141},
}

@misc{epoch2026,
  author       = {{Epoch AI}},
  title        = {FrontierMath Open Problems: Hadamard Matrix of Order 668},
  howpublished = {\url{https://epoch.ai/frontiermath/open-problems/hadamard}},
  year         = {2026},
  note         = {Accessed: September 2026; provisionally marked as solved by AI-human collaboration}
}

@article{Falcon2021,
  author    = {Falc{\'o}n, R. M. and {{\'A}}lvarez, V. and Frau, M. D. and Gudiel, F. and G{\"u}emes, M. B.},
  title     = {Pseudococyclic Partial Hadamard Matrices over Latin Rectangles},
  journal   = {Mathematics},
  volume    = {9},
  number    = {2},
  pages     = {113},
  year      = {2021},
  doi={10.3390/math9020113}
}

@misc{Falcon26,
  author    = {Falc{\'o}n, R. M. and Gonz{\'a}lez-Regadera, M. and Gudiel, F.},
  title     = {Fundamentals of the cocyclic development of {H}adamard matrices over loops},
  year      = {2026},
  note = {Preprint}, 
  doi={10.48550/arXiv.2609.08553}
}

@manual{GAP,
    author = "{The GAP Group}",
    title        = "{GAP--Groups, Algorithms, and Programming,
                    Version 4.16.1}",
    year         = 2026,
    url          = {https://www.gap-system.org}
    }

@misc{ GRAPE,
  author =           {Soicher, L. H.},
  title =            {{GRAPE}, {GRaph Algorithms using PErmutation groups}, {V}ersion 4.9.3},
  month =            {Sep},
  year =             {2025},
  note =             {GAP package},
  howpublished =     {\href                                       {https://gap-packages.github.io/grape}
                      {\texttt{https://gap\texttt{\symbol{45}}packages.github.io/}\discretionary
                      {}{}{}\texttt{grape}}},
  printedkey =       {Soi25}
}

@incollection {Horadam1995,
    AUTHOR = {Horadam, K. J. and de Launey, W.},
     TITLE = {Generation of cocyclic {H}adamard matrices},
 BOOKTITLE = {Computational algebra and number theory ({S}ydney, 1992)},
    SERIES = {Math. Appl.},
    VOLUME = {325},
     PAGES = {279--290},
 PUBLISHER = {Kluwer Acad. Publ., Dordrecht},
      YEAR = {1995}
}

@article {Horadam1998,
    AUTHOR = {Horadam, K. J. and Lin, Cantian},
     TITLE = {Construction of proper higher-dimensional {H}adamard matrices
              from perfect binary arrays},
   JOURNAL = {J. Combin. Math. Combin. Comput.},
  FJOURNAL = {Journal of Combinatorial Mathematics and Combinatorial
              Computing},
    VOLUME = {28},
      YEAR = {1998},
     PAGES = {237--248},
}

@book {Horadam2007,
    AUTHOR = {Horadam, K. J.},
     TITLE = {Hadamard matrices and their applications},
 PUBLISHER = {Princeton University Press, Princeton, NJ},
      YEAR = {2007},
     PAGES = {xiv+263}
}

@article {Johnson1990,
    AUTHOR = {Johnson, K. W. and Leedham-Green, C. R.},
     TITLE = {Loop cohomology},
   JOURNAL = {Czechoslovak Math. J.},
  FJOURNAL = {Czechoslovak Mathematical Journal},
    VOLUME = {40(115)},
      YEAR = {1990},
    NUMBER = {2},
     PAGES = {182--194}
}

@article {Krcadinac2023,
    AUTHOR = {Kr\v{c}adinac, V. and Pav\v{c}evi\'c, M. O. and Tabak,
              K.},
     TITLE = {Three-dimensional {H}adamard matrices of {P}aley type},
   JOURNAL = {Finite Fields Appl.},
  FJOURNAL = {Finite Fields and their Applications},
    VOLUME = {92},
      YEAR = {2023},
     PAGES = {Paper No. 102306, 6},
       doi={10.1016/j.ffa.2023.102306}
}

@article {Krcadinac2025,
    AUTHOR = {Kr\v{c}adinac, V. and Pav\v{c}evi\'c, M. O. and Tabak,
              K.},
     TITLE = {Cubes of symmetric designs},
   JOURNAL = {Ars Math. Contemp.},
  FJOURNAL = {Ars Mathematica Contemporanea},
    VOLUME = {25},
      YEAR = {2025},
    NUMBER = {1},
     PAGES = {Paper No. 10, 16},
       doi={10.26493/1855-3974.3222.e53}
}

@phdthesis{deLauney87,
    author       = {de Launey, W.},
    title        = {\((0,G)\)-designs and applications},
    school       = {University of Sydney},
    year         = {1987}}

@article {deLauney2008,
    AUTHOR = {de Launey, W. and Stafford, R. M.},
     TITLE = {Automorphisms of higher-dimensional {H}adamard matrices},
   JOURNAL = {J. Combin. Des.},
  FJOURNAL = {Journal of Combinatorial Designs},
    VOLUME = {16},
      YEAR = {2008},
    NUMBER = {6},
     PAGES = {507--544},
       doi={10.1002/jcd.20200}
}

@article {McKay1979,
    AUTHOR = {McKay, B. D.},
     TITLE = {Hadamard equivalence via graph isomorphism},
   JOURNAL = {Discrete Math.},
  FJOURNAL = {Discrete Mathematics},
    VOLUME = {27},
      YEAR = {1979},
    NUMBER = {2},
     PAGES = {213--214},
       doi={10.1016/0012-365X(79)90113-4}
}

@article {OC11,
    AUTHOR = {\'O{} Cath\'ain, P. and R\"oder, M.},
     TITLE = {The cocyclic {H}adamard matrices of order less than 40},
   JOURNAL = {Des. Codes Cryptogr.},
  FJOURNAL = {Designs, Codes and Cryptography. An International Journal},
    VOLUME = {58},
      YEAR = {2011},
    NUMBER = {1},
     PAGES = {73--88},
       doi={10.1007/s10623-010-9385-9}
}

@article {Ostergaard2026,
    AUTHOR = {\"Ostergard, P. R. J. and Valtonen, T.},
     TITLE = {Equivalence of complex {H}adamard matrices},
   JOURNAL = {J. Algebraic Combin.},
  FJOURNAL = {Journal of Algebraic Combinatorics. An International Journal},
    VOLUME = {64},
      YEAR = {2026},
    NUMBER = {1},
     PAGES = {Paper No. 20, 18},
       doi={10.1007/s10801-026-01568-x},
}

@article {Shlichta71,
    AUTHOR = {Shlichta, P. J.},
     TITLE = {Three and four-dimensional {H}adamard matrices},
   JOURNAL = {Bull. Am. Phys. Soc.},
    VOLUME = {1},
      YEAR = {1971},
    NUMBER = {16},
     PAGES = {825--826}
}

@article {Shlichta79,
    AUTHOR = {Shlichta, P. J.},
     TITLE = {Higher dimensional {H}adamard matrices},
   JOURNAL = {IEEE Trans. Inform. Theory},
  FJOURNAL = {Institute of Electrical and Electronics Engineers.
              Transactions on Information Theory},
    VOLUME = {25},
      YEAR = {1979},
    NUMBER = {5},
     PAGES = {566--572},
       doi={10.1109/TIT.1979.1056083}
}

@article{Yang86,
    AUTHOR = {Yang, Y. X.},
     TITLE = {The proofs of some conjectures on higher dimensional {H}adamard matrices},
   JOURNAL = {Kexue Tongbao},
  FJOURNAL = {Kexue Tongbao},
    VOLUME = {31},
      YEAR = {1986},
     PAGES = {1662--1667}
}

@book {Yang01,
    AUTHOR = {Yang, Y. X.},
     TITLE = {Theory and applications of higher-dimensional {H}adamard
              matrices},
    SERIES = {Combinatorics and Computer Science},
 PUBLISHER = {Kluwer Academic Publishers Group, Dordrecht; Science Press
              Beijing, Beijing},
      YEAR = {2001},
     PAGES = {xii+319}
}

\end{document}